\theoremstyle{plain}
\newtheorem{thm}{Theorem}
\newtheorem{theorem}{Theorem}[section]
\newtheorem{lemma}[theorem]{Lemma}
\theoremstyle{definition}
\newtheorem{remark}[theorem]{Remark}
\newcommand\bC{{\mathbb C}}
\newcommand\bG{{\mathbb G}}
\newcommand\bP{{\mathbb P}}
\newcommand\bQ{{\mathbb Q}}
\newcommand\bR{{\mathbb R}}
\newcommand\bZ{{\mathbb Z}}
\newcommand\cO{{\mathcal O}}
\newcommand\fg{\mathfrak{g}}
\newcommand\fh{\mathfrak{h}}
\newcommand\fm{\mathfrak{m}}
\newcommand\rM{{\rm M}}
\newcommand\wA{\widehat{A}}
\newcommand\aff{{\rm aff}}
\newcommand\ant{{\rm ant}}
\newcommand\charac{{\rm char}}
\newcommand\gp{{\rm gp}}
\newcommand\id{{\rm id}}
\newcommand\num{{\rm num}}
\newcommand\red{{\rm red}}
\newcommand\Alb{{\rm Alb}}
\newcommand\Aut{{\rm Aut}}
\newcommand\Chow{{\rm Chow}}
\newcommand\End{{\rm End}}
\newcommand\GL{{\rm GL}}
\newcommand\Grass{{\rm Grass}}
\newcommand\Hilb{{\rm Hilb}}
\newcommand\Int{{\rm Int}}
\newcommand\Ker{{\rm Ker}}
\newcommand\Lie{{\rm Lie}}
\newcommand\N{{\rm N}}
\newcommand\NS{{\rm NS}}
\newcommand\Pic{{\rm Pic}}
\newcommand\PGL{{\rm PGL}}
\newcommand\Spec{{\rm Spec}}
\newcommand\Supp{{\rm Supp}}
\numberwithin{equation}{section}
\title{Automorphism groups of almost homogeneous varieties}
\author{Michel Brion}
\date{}
\begin{document}

\begin{abstract}
Consider a smooth connected algebraic group $G$ acting on
a normal projective variety $X$ with an open dense orbit. 
We show that $\Aut(X)$ is a linear algebraic group if so is $G$; 
for an arbitrary $G$, the group of components of $\Aut(X)$ is 
arithmetic. Along the way, we obtain a restrictive condition 
for $G$ to be the full automorphism group of some normal 
projective variety. 
\end{abstract}

\maketitle

\section{Introduction}
\label{sec:int}

Let $X$ be a projective algebraic variety over an algebraically 
closed field $k$. It is known that the automorphism group 
$\Aut(X)$ has a natural structure of smooth $k$-group scheme, 
locally of finite type (see \cite[p.~268]{Gro61}. 
This yields an exact sequence 
\[ 1 \longrightarrow \Aut^0(X) \longrightarrow \Aut(X)
\longrightarrow \pi_0 \, \Aut(X) \longrightarrow 1, \]
where $\Aut^0(X)$ is (the group of $k$-rational
points of) a smooth connected algebraic group, and 
$\pi_0 \, \Aut(X)$ is a discrete group. 

To analyze the structure of $\Aut(X)$, one may
start by considering the connected automorphism
group $\Aut^0(X)$ and the group of components 
$\pi_0 \, \Aut(X)$ separately. It turns out that
there is no restriction on the former: every smooth
connected algebraic group is the connected 
automorphism group of some normal projective variety
$X$ (see \cite[Thm.~1]{Br14}). In characteristic $0$, 
we may further take $X$ to be smooth by using 
equivariant resolution of singularities (see 
e.g.~\cite[Chap.~3]{Kol07}).

By constrast, little is known on the structure 
of the group of components. Every finite group $G$
can be obtained in this way, as $G$ is the full automorphism 
group of some smooth projective curve (see the main
result of \cite{MV}). But the group of components is 
generally infinite, and it is unclear how infinite it can be.

The long-standing question whether this group
is finitely generated has been recently answered in the
negative by Lesieutre. He constructed an example 
of a smooth projective variety of dimension $6$ having 
a discrete, non-finitely generated automorphism group 
(see \cite{Lesieutre}). His construction has been 
extended in all dimensions at least $2$ by Dinh and Oguiso, 
see \cite{DO}. The former result is obtained over an 
arbitrary field of characteristic $0$, while the latter 
holds over the complex numbers; it is further extended to 
odd characteristics in \cite{Oguiso}. On the positive side, 
$\pi_0 \, \Aut(X)$ is known to be finitely presented for 
some interesting classes of projective varieties, including 
abelian varieties (see \cite{Bor62}) and complex 
hyperk\"ahler manifolds (see \cite[Thm.~1.5]{CF}).

In this article, we obtain three results on automorphism
groups, which generalize recent work. The first one goes 
in the positive direction for  \emph{almost homogeneous} 
varieties, i.e., those on which a smooth connected algebraic 
group acts with an open dense orbit.

\begin{thm}\label{thm:ahv}
Let $X$ be a normal projective variety, almost homogeneous 
under a linear algebraic group. Then $\Aut(X)$ is a linear 
algebraic group as well.
\end{thm}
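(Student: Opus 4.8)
The plan is to use the exact sequence
\[ 1 \longrightarrow \Aut^0(X) \longrightarrow \Aut(X) \longrightarrow \pi_0\,\Aut(X) \longrightarrow 1 \]
and to prove separately that $\Aut^0(X)$ is a linear algebraic group and that $\pi_0\,\Aut(X)$ is finite; since an extension of a finite group by a linear algebraic group is again linear, this yields the theorem. First I note that $X$ is almost homogeneous under $\Aut^0(X)$ itself: the given group $G$ is connected, so its image in $\Aut(X)$ lies in $\Aut^0(X)$, and the open dense $G$-orbit $\Omega$ is therefore contained in a single $\Aut^0(X)$-orbit $\Omega'$, which is then open and dense in $X$.

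To see that $H := \Aut^0(X)$ is affine, I would use the Albanese morphism $\alpha \colon X \to \Alb(X)$. Fixing a base point $x_0 \in \Omega'$, the orbit map composed with $\alpha$ is, up to a translation, a homomorphism $\beta \colon H \to \Alb(X)$, and $\alpha(\Omega')$ is a translate of $\beta(H)$; since $\Omega'$ is dense, $\beta(H) = \Alb(X)$. But the composite $G \to H \to \Alb(X)$ is a homomorphism from an affine group to an abelian variety, hence constant, so $\alpha$ is constant on the dense orbit $\Omega$ and therefore on $X$. Thus $\Alb(X) = 0$ and $\beta = 0$. By the theorem of Nishi and Matsumura the kernel of $H \to \Alb(X)$ is affine, whence $H = \Aut^0(X)$ is affine.

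For the finiteness of $\pi_0\,\Aut(X)$ I would exploit the action on the N\'eron--Severi group. Since $\Aut^0(X)$ is normal in $\Aut(X)$, the latter permutes the $\Aut^0(X)$-orbits; as $\Omega'$ is the unique open orbit it is $\Aut(X)$-stable, and hence $\Aut(X)$ permutes the finitely many codimension-one components $D_1', \dots, D_s'$ of the boundary $X \setminus \Omega'$, through a finite group $S$. The class of the reduced boundary $\sum_i D_i'$ is in particular fixed by $\Aut(X)$. The crux of the argument is to upgrade this to an $\Aut(X)$-invariant ample class $h \in \NS(X)$: using that $\Omega'$ is a homogeneous space under the affine group $\Aut^0(X)$, the aim is to produce a positive combination of the boundary divisors that is big, resp.\ ample, and then to symmetrize it over $S$ (a sum of ample divisors being ample) so as to make it invariant.

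Granting such an invariant ample class, I would finish as follows. The image $\Gamma$ of $\Aut(X)$ in $\GL(\NS(X))$ preserves the lattice $\NS(X)$, the intersection pairing, and the class $h$; by the Hodge index theorem the quadratic form $x \mapsto (h^{\dim X - 2} \cdot x \cdot x)$ has signature $(1, \rho-1)$ and is negative definite on $h^\perp$, so the stabilizer of $h$ in the corresponding integral orthogonal group is finite, whence $\Gamma$ is finite. On the other hand, the kernel of $\Aut(X) \to \GL(\NS(X))$ fixes the ample class $h$, and the automorphisms preserving a polarization form a group scheme of finite type (boundedness of the relevant graphs in a fixed Hilbert scheme), so this kernel has finitely many components. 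Combining the two facts shows that $\pi_0\,\Aut(X)$ is finite, completing the proof. I expect the main obstacle to be the middle step: controlling the action of $\Aut(X)$ on $\NS(X)$ from the geometry of the open orbit and its complement, the subtle point being that the boundary divisors need not span $\NS(X)_\bQ$ in general, so that finiteness of the induced action requires a more careful analysis.
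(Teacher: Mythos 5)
Your reduction of the problem is reasonable: showing that $\Aut^0(X)$ is linear (your Albanese/Nishi--Matsumura argument is essentially the paper's, which instead uses unirationality of $X$ to kill $\Alb(X)$ and $\Pic^0(X)$), and your two endgame steps (Hodge index for the image in $\GL(\NS(X))$, boundedness of graphs for the kernel) are sound. But the step you flag as ``the crux'' is a genuine gap, and the specific route you propose for it cannot work. A positive combination of the boundary divisors of the open orbit $\Omega' \simeq G/H$ need not be big: the boundary may carry no big divisor at all, or may even be empty. For instance, take $X = \bP^1 \times \bP^n$ with $G = B \times \PGL_{n+1}$, $B$ a Borel subgroup of $\PGL_2$; the open orbit is $\bA^1 \times \bP^n$ and the boundary is the single divisor $D = \{\infty\} \times \bP^n$, whose class $\cO(1) \boxtimes \cO$ is nowhere near big. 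In the extreme case of a flag variety $G/P$ the boundary is empty and your construction produces nothing. The underlying obstruction is that $G/H$ is in general not quasi-affine, so no divisor supported on $X \setminus G/H$ can be big. This is exactly the point the paper's proof is built to overcome: it introduces a characteristic-free Tits fibration $f : G/H \to G/N_G(H^0)$ (defined via infinitesimal neighborhoods of $e$, Lemma \ref{lem:tits}), which is an \emph{affine} morphism onto a quasi-affine orbit in a Grassmannian. After $\Aut(X)$-equivariant birational modifications (normalizing the graph of the induced rational map and blowing up the boundary, with Lemma \ref{lem:closed} guaranteeing nothing is lost), one adds to the boundary divisor the pullback of an ample divisor from the Grassmannian; the complement of the union is then affine, hence the divisor is big by the criterion of Lemma \ref{lem:affbig}. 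No combination of boundary divisors alone achieves this.

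There is a second, smaller gap in your endgame: the Hodge index argument requires the invariant class $h$ to be ample (or at least big and nef), whereas any construction along these lines naturally produces only a \emph{big} class. The paper deals with this by proving directly (Lemma \ref{lem:big}) that $\Aut(X,[L])$ is a linear algebraic group for any big $L$, via an equivariant resolution of the base locus reducing to the big-and-nef case, where the Hilbert-scheme/degree-bounding argument (Lemma \ref{lem:bignef}) applies; this single lemma then yields both the finiteness of $\pi_0$ and the linearity at once, rather than treating $\Aut^0$ and $\pi_0$ separately as you do. So your architecture is workable in outline, but the two missing ingredients --- the affine Tits fibration to manufacture a big invariant divisor, and the passage from big to big-and-nef --- are precisely where the content of the theorem lies.
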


This was first obtained by Fu and Zhang in the setting of
compact K\"ahler manifolds (see \cite[Thm.~1.2]{FZ}).
The main point of their proof is to show that the 
anticanonical line bundle is big. This relies on Lie-theoretical
methods, in particular the $\mathfrak{g}$-anticanonical fibration 
of \cite[I.2.7]{HO}, also known as the Tits fibration. But this
approach does not extend to positive characteristics, already 
when $X$ is homogeneous under a semi-simple algebraic group: 
then any big line bundle on $X$ is ample, but $X$ is generally 
not Fano (see \cite{HL}). 

To prove Theorem \ref{thm:ahv}, we construct a normal projective
variety $X'$ equipped with a birational morphism $f: X' \to X$ 
such that the action of $\Aut(X)$ on $X$ lifts to an action on 
$X'$ that fixes the isomorphism class of a big line bundle. 
For this, we use a characteristic-free version of the Tits fibration
(Lemma \ref{lem:tits}). 

Our second main result goes in the negative direction, as it
yields many examples of algebraic groups which cannot
be obtained as the automorphism group of a normal
projective variety. To state it, we introduce some notation.

Let $G$ be a smooth connected algebraic group. 
By Chevalley's structure theorem (see \cite{Conrad} 
for a modern proof), there is a unique exact sequence 
of algebraic groups
\[ 1 \longrightarrow G_{\aff} \longrightarrow G 
\longrightarrow A \longrightarrow 1, \]
where $G_{\aff}$ is a smooth connected affine (or 
equivalently linear) algebraic group and $A$ is an abelian 
variety. We denote by $\Aut^{G_{\aff}}_{\gp}(G)$ the group 
of automorphisms of the algebraic group $G$ which fix 
$G_{\aff}$ pointwise.

\begin{thm}\label{thm:inf}
With the above notation, assume that the group
$\Aut^{G_{\aff}}_{\gp}(G)$ is infinite. If $G \subset \Aut(X)$ 
for some normal projective variety $X$, then $G$ has infinite 
index in $\Aut(X)$. 
\end{thm}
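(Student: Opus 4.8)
The plan is to argue by contradiction: first reduce to the case $G = \Aut^0(X)$ with $\pi_0 \, \Aut(X)$ finite, and then manufacture a contradiction by comparing the hypothesised infinitude of $\Aut^{G_{\aff}}_{\gp}(G)$ with the automorphisms that $\Aut(X)$ can realize on the Albanese variety. First I would dispose of the connected part. Since $G$ is smooth and connected it lies in $\Aut^0(X)$; if $G \subsetneq \Aut^0(X)$, then $\Aut^0(X)/G$ is a positive-dimensional homogeneous space over the infinite field $k$, so $G$ already has infinitely many cosets and we are done. Hence we may assume $G = \Aut^0(X)$, in which case $[\Aut(X):G] = |\pi_0 \, \Aut(X)|$, and it suffices to show that $\pi_0 \, \Aut(X)$ is infinite. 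Suppose, for contradiction, that it is finite, so that $\Aut(X)$ is an algebraic group of finite type with identity component $G$.

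Next I would exploit that $G = \Aut^0(X)$ is normal, so conjugation gives a homomorphism $\rho \colon \Aut(X) \to \Aut_{\gp}(G)$ carrying $G$ onto $\mathrm{Inn}(G)$; thus $\rho$ descends to $\pi_0 \, \Aut(X) \to \mathrm{Out}(G)$ and $\rho(\Aut(X))/\mathrm{Inn}(G)$ is finite. Since $G_{\aff}$ is the characteristic affine subgroup, each $\rho(\sigma)$ preserves $G_{\aff}$, and the centralizer $C = C_{\Aut(X)}(G_{\aff})$ satisfies $\rho(C) \subseteq \Aut^{G_{\aff}}_{\gp}(G)$. In parallel I would bring in the Albanese $B = \Alb(X)$: the group $\Aut(X)$ acts on $B$ by affine transformations, and by the Nishi--Matsumura theorem $G_{\aff}$ acts trivially while $G$ acts through translations by the image of $A = G/G_{\aff}$ under the canonical map $a \colon A \to B$. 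Consequently the linear part $\ell \colon \Aut(X) \to \Aut_{\gp}(B)$ kills $\Aut^0(X) = G$, factors through $\pi_0 \, \Aut(X)$, and therefore has finite image under our standing assumption.

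The comparison step is where the two sides must meet. On the group-theoretic side, restriction to $A$ sends $\phi \in \Aut^{G_{\aff}}_{\gp}(G)$ to $\bar\phi \in \Aut_{\gp}(A)$; the kernel of $\phi \mapsto \bar\phi$ is controlled by crossed homomorphisms $A \to Z(G_{\aff})$, which is finite (indeed trivial in characteristic $0$) because an abelian variety admits no nonconstant homomorphism to an affine group. Thus $\Aut^{G_{\aff}}_{\gp}(G)$ embeds, with finite kernel, as a subgroup $S \le \Aut_{\gp}(A)$, and $S$ is infinite by hypothesis. I would then show, using the structure theory of $G$-actions on normal projective varieties, that $a \colon A \to B$ has finite kernel and that $\rho$ and $\ell$ are intertwined through $a$; this identifies $S$, up to commensurability, with a subgroup of $\mathrm{im}(\ell)$. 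Since $\mathrm{im}(\ell)$ is finite, $S$ is finite, contradicting its infinitude and completing the proof.

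The main obstacle is precisely this last comparison: showing that the \emph{intrinsic} group $\Aut^{G_{\aff}}_{\gp}(G)$, a priori unrelated to any given $X$, is forced to be commensurable with automorphisms that $\Aut(X)$ actually realizes on $\Alb(X)$. Controlling the map $a \colon A \to B$ and the compatibility of the conjugation homomorphism $\rho$ with the linear part $\ell$ requires the structure theory of such $G$-actions, via a characteristic-free Tits-type fibration as in Lemma \ref{lem:tits}; moreover the non-commutativity of $G_{\aff}$ and positive-characteristic phenomena (torsion in $\Hom(A, G_{\aff})$) must be handled with care.
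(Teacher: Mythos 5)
Your proposal has a genuine gap at exactly the step you flag as ``the main obstacle'', and that obstacle is the whole content of the theorem. Your contradiction argument runs: if $\pi_0\,\Aut(X)$ were finite, the linear part $\ell:\Aut(X)\to\Aut_{\gp}(\Alb(X))$ would have finite image; you then want to conclude that the image $S$ of $\Aut^{G_{\aff}}_{\gp}(G)$ in $\Aut_{\gp}(A)$ is (up to commensurability) a subgroup of $\mathrm{im}(\ell)$, hence finite. But $\mathrm{im}(\ell)$ consists only of automorphisms of $\Alb(X)$ induced by \emph{actual} automorphisms of $X$, whereas $\Aut^{G_{\aff}}_{\gp}(G)$ is an abstract group of automorphisms of $G$ with no a priori action on $X$. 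No amount of ``intertwining $\rho$ and $\ell$ through $a$'' produces the containment $S\subset\mathrm{im}(\ell)$: that containment is equivalent to realizing (a finite-index subgroup of) $\Aut^{G_{\aff}}_{\gp}(G)$ inside $\Aut(X)$, which is precisely what must be constructed and which your proposal never does. Without that construction the finiteness of $\mathrm{im}(\ell)$ contradicts nothing.

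The paper supplies exactly this missing construction, and does so directly rather than by contradiction. By \cite[Thm.~2]{Br10} there is a $G$-equivariant fibration $f:X\to G/H$ with $G_{\aff}\subset H$ and $H/G_{\aff}$ finite, giving $X\simeq G\times^H Y$ as an associated bundle. For $\gamma\in\Aut^H_{\gp}(G)$ the automorphism $\gamma\times\id_Y$ of $G\times Y$ is $H$-equivariant and descends through the categorical quotient to an automorphism $\varphi(\gamma)$ of $X$; Lemma \ref{lem:psi} shows $\Aut^H_{\gp}(G)$ has finite index in $\Aut^{G_{\aff}}_{\gp}(G)$ (hence is infinite), and Lemma \ref{lem:varphi} shows $\varphi$ is injective with image normalizing $G$ and meeting it trivially, so the cosets $\varphi(\gamma)G$ are pairwise distinct and $G$ has infinite index. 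Your preliminary reductions (to $G=\Aut^0(X)$, the injectivity of $\Aut^{G_{\aff}}_{\gp}(G)\to\Aut_{\gp}(A)$ via the constancy of crossed homomorphisms $A\to G_{\aff}$) are correct but peripheral; to repair the proof you would in any case need the fibration of \cite[Thm.~2]{Br10}, the twisted-product presentation, the descent argument, and the finite-index statement of Lemma \ref{lem:psi}, at which point the contradiction framing becomes superfluous.
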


It is easy to show that $\Aut^{G_{\aff}}_{\gp}(G)$ 
is an arithmetic group, and to construct classes of examples 
for which this group is infinite, see Remark \ref{rem:inf}.  

If $G$ is an abelian variety, then $\Aut^{G_{\aff}}_{\gp}(G)$ 
is just its group of automorphisms as an algebraic group. 
In this case, Theorem \ref{thm:inf} is due (in essence) 
to Lombardo and Maffei, see \cite[Thm.~2.1]{LM}. They also 
obtain a converse over the field of complex numbers: 
given an abelian variety $G$ with finite automorphism group, 
they construct a smooth projective variety $X$ such that 
$\Aut(X) = G$ (see \cite[Thm.~3.9]{LM}). 

Like that of \cite[Thm.~2.1]{LM}, the proof of Theorem 
\ref{thm:inf} is based on the existence of a homogeneous 
fibration of $X$ over an abelian variety, the quotient
of $A$ by a finite subgroup scheme. This allows us to
construct an action on $X$ of a subgroup of finite index 
of $\Aut^{G_{\aff}}_{\gp}(G)$, which normalizes $G$ 
and intersects this group trivially. 

When $X$ is almost homogeneous under $G$, the Albanese
morphism provides such a homogeneous fibration,
as follows from \cite[Thm.~3]{Br10}. A finer analysis of its
automorphisms leads to our third main result.

\begin{thm}\label{thm:pos}
Let $X$ be a normal projective variety, almost homogeneous 
under a smooth connected algebraic group $G$. Then
$\pi_0 \, \Aut(X)$ is an arithmetic group. In positive 
characteristics, $\pi_0 \, \Aut(X)$ is commensurable with 
$\Aut_{\gp}^{G_{\aff}}(G)$.
\end{thm}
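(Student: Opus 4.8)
The plan is to analyze $\Aut(X)$ through the Albanese morphism $\alpha \colon X \to B$, where $B := \Alb(X)$. By \cite[Thm.~3]{Br10}, almost homogeneity forces $\alpha$ to be a homogeneous fibration: $B$ is the quotient $A/N$ of the abelian variety $A$ of Chevalley's decomposition by a finite subgroup scheme $N$; the group $G$ acts on $B$ transitively through $G \to A \to B$; and the fibers are isomorphic to a fixed normal projective variety $Y$ that is almost homogeneous under $G_{\aff}$. Since the Albanese variety is functorial, every automorphism of $X$ induces one of $B$, giving a homomorphism $\rho \colon \Aut(X) \to \Aut(B)$. Composing with the projection $\Aut(B) = B \rtimes \Aut_{\gp}(B) \to \Aut_{\gp}(B)$ onto a discrete group, I obtain a homomorphism that factors through the component group,
\[ \bar\rho \colon \pi_0 \, \Aut(X) \longrightarrow \Aut_{\gp}(B). \]
The strategy is to prove that $\bar\rho$ has finite kernel and arithmetic image, and then to match that image with $\Aut_{\gp}^{G_{\aff}}(G)$.

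For the kernel, note that $\ker \bar\rho$ is the component group of the subgroup scheme of automorphisms inducing a translation on $B$. Since the translations of $B$ are already realized by $G \subseteq \Aut^0(X)$, every such automorphism agrees, up to an element of $G$, with one preserving each fiber of $\alpha$. The latter restrict to $\Aut(Y)$, which is a \emph{linear} algebraic group by Theorem \ref{thm:ahv}; I would use the homogeneity of the bundle to reconstruct a fiber-preserving automorphism from its restriction to one fiber, thereby identifying the group of fiber-preserving automorphisms with a subgroup scheme of $\Aut(Y)$. Being linear, this group has finitely many components, so $\ker \bar\rho$ is finite and $\pi_0 \, \Aut(X)$ is commensurable with $\Im(\bar\rho)$.

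For the image, I would identify $\Im(\bar\rho)$ with an explicit stabilizer. Because $\Hom(A, G_{\aff}) = 0$ (a connected proper affine group is trivial), every automorphism of $G$ fixing $G_{\aff}$ pointwise descends injectively to $\Aut_{\gp}(A)$, and one identifies $\Aut_{\gp}^{G_{\aff}}(G)$ with the stabilizer of the class $[G] \in \Ext^1(A, G_{\aff})$ under the natural action of $\Aut_{\gp}(A)$. This stabilizer is the fixed-point group of a point for an arithmetic action (through $A^\vee$ and $H^1(A, \cO_A)$), hence is arithmetic. On the other side, a class $\gamma \in \Aut_{\gp}(B)$ lies in $\Im(\bar\rho)$ exactly when it lifts to an automorphism of the homogeneous bundle $\alpha$, i.e.\ preserves $N \subset A$ and the bundle data; transporting this condition along the isogeny $A \to B$, I would show $\Im(\bar\rho)$ is commensurable with the extension-class stabilizer, proving it arithmetic. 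Combined with the previous paragraph, $\pi_0 \, \Aut(X)$ is arithmetic. For the lower bound I would invoke the construction in the proof of Theorem \ref{thm:inf}: a finite-index subgroup of $\Aut_{\gp}^{G_{\aff}}(G)$ acts on $X$, normalizing $G$ and meeting it trivially, and hence injects up to finite index into $\pi_0 \, \Aut(X)$.

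The main obstacle is the exact determination of $\Im(\bar\rho)$: deciding which automorphisms of $B$ genuinely lift to $X$ and showing the resulting group is commensurable with $\Aut_{\gp}^{G_{\aff}}(G)$, not merely contained in an arithmetic group. This is precisely where the characteristic intervenes. In positive characteristic the finite subgroup scheme $N$ may be non-reduced and the bundle data may descend inseparably, so the lift must be controlled through infinitesimal contributions; the rigidity of these finite group schemes pins $\Im(\bar\rho)$ down to a group commensurable with the extension-class stabilizer $\Aut_{\gp}^{G_{\aff}}(G)$. In characteristic zero these infinitesimal phenomena disappear, but the fiber $Y$ may contribute additional discrete automorphisms to $\Im(\bar\rho)$, which is why one can then only assert arithmeticity of $\pi_0 \, \Aut(X)$.
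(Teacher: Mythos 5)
Your overall skeleton --- analyze $\Aut(X)$ via the Albanese fibration $X \to G/H$, control the fiber direction using Theorem \ref{thm:ahv}, and compare the base direction with $\Aut_{\gp}^{G_{\aff}}(G)$ --- is the same as the paper's. But there are two genuine gaps, and the first is fatal to the characteristic-zero case. You propose to identify $\Im(\bar\rho)$, up to commensurability, with the stabilizer of the extension class $[G] \in \Ext^1(A,G_{\aff})$, i.e.\ essentially with $\Aut_{\gp}^{G_{\aff}}(G)$, and to deduce arithmeticity from that identification. This identification is false in characteristic $0$: for the universal vector extension $G$ of an abelian variety $A$ and $X = G \times^U \bP(U \oplus k)$, one has $\Aut_{\gp}^{U}(G) = 1$ while $\pi_0 \, \Aut(X) \simeq \Aut_{\gp}(A)$ (Remark \ref{rem:pos}). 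The group that actually arises is the set of $\gamma \in \Aut_{\gp}(\wA,\Lambda,V)$ --- for a finitely generated subgroup $\Lambda \subset \wA(k)$ and a subspace $V \subset \Lie(\wA)$ encoding the anti-affine extension data --- whose restriction to $V$ lies in the algebraic subgroup $\Aut(Y,K)\vert_V$ of $\GL(V)$. That last condition is algebraic over $k$ (or $\bR$), not over $\bQ$, so arithmeticity is not a formal consequence of "stabilizer of a point for an arithmetic action"; the paper needs a separate argument (Lemma \ref{lem:gael}, due to R\'emond) that replaces the real algebraic condition by the Zariski closure of its rational points. Your closing paragraph concedes that the fiber contributes extra automorphisms in characteristic $0$, but supplies no mechanism for proving the resulting larger group is arithmetic --- and that is precisely the hard part of the theorem in that case.

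The second gap concerns positive characteristic when $K = G_{\ant} \cap H$ is not smooth. Then the Albanese fiber $Y$ need not be reduced, let alone a normal variety almost homogeneous under a smooth group, so you cannot invoke Theorem \ref{thm:ahv} to conclude that the fiber-preserving automorphisms form a linear algebraic group, and your finiteness of $\ker \bar\rho$ is unsupported. An appeal to "rigidity of finite group schemes" does not address this. The paper's remedy is to pass to the quotients $X/I_n$ by the Frobenius kernels $I_n$ of $G_{\ant}$, for which $K/(I_n \cap K)$ becomes smooth for $n \gg 0$, and to check that $\Aut(X) \to \Aut(X/I_n)$ is bijective on $k$-points because $k(X)$ is purely inseparable over $k(X/I_n)$ and $X$ is the normalization of $X/I_n$ in $k(X)$. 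Some reduction of this kind is needed before your kernel argument can run.
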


\noindent
(The second assertion does not hold in characteristic $0$, 
see Remark \ref{rem:pos}). 

These results leave open the question 
whether every \emph{linear} algebraic group is the automorphism 
group of a normal projective variety. Further open questions are 
discussed in the recent survey \cite{Cantat}, in the setting of smooth 
complex projective varieties.

\medskip

\noindent
{\bf Acknowledgments.} The above results have first been presented
in a lecture at the School and Workshop on Varieties and Group Actions
(Warsaw, September 23--29, 2018), with a more detailed and 
self-contained version of this article serving as lecture notes
(see \cite{Br19}). I warmly thank the organizers of this event 
for their invitation, and the participants for stimulating questions. 
Also, I thank Roman Avdeev, Yves de Cornulier, Fu Baohua, 
H\'el\`ene Esnault and Bruno Laurent for helpful discussions or email 
exchanges on the topics of this article, and the two anonymous referees
for their valuable remarks and comments. Special thanks are due 
to Serge Cantat for very enlightening suggestions and corrections, 
and to Ga\"el R\'emond for his decisive help with the proof of 
Theorem \ref{thm:pos}. 

This work was partially supported by the grant 346300 for IMPAN 
from the Simons Foundation and the matching 2015-2019 Polish 
MNiSW fund.

\section{Some preliminary results}
\label{sec:prel}

We first set some notation and conventions, which will be
valid throughout this article. We fix an algebraically closed 
ground field $k$ of characteristic $p \geq 0$. 
By a \emph{scheme}, we mean a separated scheme over $k$, 
unless otherwise stated; a \emph{subscheme} is a locally closed 
$k$-subscheme. Morphisms and products of schemes are 
understood to be over $k$ as well. A \emph{variety} is an integral 
scheme of finite type. An \emph{algebraic group} is a group scheme 
of finite type; a \emph{locally algebraic group} is a group scheme, 
locally of finite type. 

Next, we present some general results on automorphism groups, 
refering to \cite[Sec.~2]{Br19} for additional background 
and details. We begin with a useful observation:

\begin{lemma}\label{lem:closed}
Let $f: X \to Y$ be a birational morphism, where $X$ and $Y$ 
are normal projective varieties. Assume that the action of 
$\Aut(Y)$ on $Y$ lifts to an action on $X$. Then the 
corresponding homomorphism $\rho : \Aut(Y) \to \Aut(X)$ 
is a closed immersion.
\end{lemma}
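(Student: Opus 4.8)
The first and crucial input is that, since $f$ is birational with $Y$ normal and projective, the natural map $\cO_Y \to f_*\cO_X$ is an isomorphism; in particular $f$ is schematically dominant with geometrically connected fibres, and these properties persist under any base change $T \to \Spec k$, as such a base change is flat and $f$ is proper, so $(f_T)_*\cO_{X_T} = \cO_{Y_T}$. Throughout I write $X_T := X \times T$, $f_T := f \times \id_T$, and use that $\Aut(Y)(T)$ is the group of $T$-automorphisms of $Y_T$, with the lifted action encoded by the relation $f_T \circ \rho(g) = g \circ f_T$ for $g \in \Aut(Y)(T)$. My plan is to show that $\rho$ is a monomorphism with closed image and then that it is an isomorphism onto this image; equivalently, I will argue that $\rho$ is a proper monomorphism, since a proper monomorphism is a closed immersion.

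First I would check that $\rho$ is a monomorphism, i.e. $\Ker \rho$ is trivial: if $g \in \Ker\rho(T)$ then $g \circ f_T = f_T \circ \rho(g) = f_T$, and schematic dominance of $f_T$ forces $g = \id_{Y_T}$; as this holds for every $T$, the kernel is trivial. Next I would exhibit the image as a closed subgroup scheme. Let $R := X \times_Y X \subseteq X \times X$, a closed subscheme (the preimage of $\Delta_Y$ under $f \times f$). The diagonal action of $\Aut(X)$ on $X \times X$ induces an orbit morphism to the Hilbert scheme of $X \times X$, namely $\psi \mapsto [(\psi \times \psi)(R)]$, whose fibre over $[R]$ is the closed subgroup scheme $\mathrm{Stab}([R])$. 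A short computation using $R = (f \times f)^{-1}(\Delta_Y)$ shows $\Im\rho \subseteq \mathrm{Stab}([R])$; conversely, any $\psi$ stabilising $R$ contracts the fibres of $f$, hence descends to an endomorphism $g$ of $Y$ with $f\psi = gf$, and applying the same reasoning to $\psi^{-1}$ shows $g \in \Aut(Y)$ and $\psi = \rho(g)$. Thus $\Im\rho = \mathrm{Stab}([R])$ is closed.

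The main obstacle is the final step: upgrading this set-theoretic description of the image to the statement that $\rho$ is an isomorphism onto $\mathrm{Stab}([R])$, which amounts to performing the descent $\psi \mapsto g$ functorially over an arbitrary base $T$, including non-reduced and higher-dimensional ones. The point is that $Y_T$ must be the quotient of $X_T$ by $R_T$, in the sense that any $T$-morphism $h : X_T \to Z$ with $(h \times h)(R_T) \subseteq \Delta_Z$ factors uniquely through $f_T$; granting this, $h := f_T \circ \psi$ factors as $g \circ f_T$, providing the inverse to $\rho$. I would establish this factorisation via the rigidity lemma applied to the proper morphism $f_T$ with $(f_T)_*\cO_{X_T} = \cO_{Y_T}$, the delicate part being its stability under base change. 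For this it is cleanest to verify properness of $\rho$ directly by the valuative criterion, testing a single discrete valuation ring $\cO$ with fraction field $K$: given $\psi \in \Aut(X)(\cO)$ lifting some $g_K \in \Aut(Y)(K)$, one observes that $R_\cO$ is the schematic closure of $R_K$, so $\psi$ preserves $R_\cO$ because it does so over $K$, and then the rigidity lemma descends $\psi$ to the required $g_\cO \in \Aut(Y)(\cO)$. Combined with the monomorphism property, this shows $\rho$ is a proper monomorphism, hence a closed immersion.
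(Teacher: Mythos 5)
Your final step is where the argument has a genuine gap. You reduce everything to ``$\rho$ is a proper monomorphism, hence a closed immersion,'' and you propose to establish properness by checking the valuative criterion over discrete valuation rings. But $\Aut(Y)$ and $\Aut(X)$ are only \emph{locally} of finite type, typically with infinitely many connected components (this is the whole subject of the paper), and $\rho$ is not known to be quasi-compact at this stage. For a non-quasi-compact morphism, the existence part of the valuative criterion does not imply universal closedness, and ``monomorphism $+$ valuative criterion'' does not imply ``closed immersion'': the inclusion of the constant group scheme $\bZ$ into $\bG_a$ in characteristic $0$ is a monomorphism of locally algebraic groups with trivial kernel satisfying the valuative criterion over every DVR, yet it is not a closed immersion; worse, the tautological map from the constant group scheme $k$ to $\bG_a$ is even bijective on $k$-points and still not a closed immersion. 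These are exactly the pathologies you must exclude here --- infinitely many components of $\Aut(Y)$ accumulating inside a single component of $\Aut(X)$ --- and no amount of testing over DVRs detects them. So the concluding sentence of your proposal is not justified by what precedes it.

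The repair is available inside your own sketch: carry out the descent $\psi \mapsto g$ functorially instead of retreating to the valuative criterion. Since every base change is along the flat morphism $T \to \Spec k$, you do have $(f_T)_*(\cO_{X_T}) = \cO_{Y_T}$ for all $T$, and the rigidity argument works over an arbitrary (possibly non-reduced) $T$: the schematic image $\Gamma \subset Y_T \times_T Y_T$ of $(f_T, f_T\circ\psi)$ is proper over $Y_T$ with $\cO_{Y_T} \to \mathrm{pr}_{1*}(\cO_\Gamma)$ an isomorphism, and it is quasi-finite over $Y_T$ because $\psi$ preserves $R_T$, hence $\mathrm{pr}_1\vert_\Gamma$ is finite and then an isomorphism, producing $g$ with $f_T\circ\psi = g \circ f_T$. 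One further detail you elide: this gives $f_T\circ\psi = f_T\circ\rho(g)$, and to conclude $\psi = \rho(g)$ you need the group scheme $\Aut(X/Y)$ to be trivial (it has no $k$-points beyond the identity since $f$ is birational and $X$ is a variety, and its Lie algebra $H^0(X,\mathcal{H}om(\Omega_{X/Y},\cO_X))$ vanishes because $\Omega_{X/Y}$ is torsion and $\cO_X$ is torsion-free). With that, $\rho(T): \Aut(Y)(T) \to \mathrm{Stab}([R])(T)$ is bijective for every $T$, so by Yoneda $\rho$ is an isomorphism onto the closed subscheme $\mathrm{Stab}([R])$ --- no properness needed. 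This is a genuinely different route from the paper, which instead invokes Blanchard's lemma to produce an inverse $f_*: \Aut^0(X) \to \Aut^0(Y)$ of $\rho^0$, so that the image of $\rho$ contains $\Aut^0(X)$ and is automatically a union of connected components; that argument is much shorter precisely because it outsources all the descent to the connected case, where it is already in the literature.
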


\begin{proof}
Since $f$ restricts to an isomorphism on dense open
subvarieties of $X$ and $Y$, the scheme-theoretic
kernel of $\rho$ is trivial. Thus, $\rho$ induces
a closed immersion $\rho^0 : \Aut^0(Y) \to \Aut^0(X)$.
On the other hand, we have $f_*(\cO_X) = \cO_Y$
by Zariski's Main Theorem; thus, Blanchard's lemma
(see \cite[Prop.~4.2.1]{BSU}) yields a
homomorphism $f_* : \Aut^0(X) \to \Aut^0(Y)$.
Clearly, $\rho^0$ and  $f_*$ are mutually inverse; 
thus, the image of $\rho$ contains $\Aut^0(X)$.
Since $\Aut(X)/\Aut^0(X)$ is discrete, this 
yields the statement.
\end{proof}

We now discuss the action of automorphisms on line bundles. 
Given a projective variety $X$, the Picard group $\Pic(X)$ 
has a canonical structure of locally algebraic group (see 
\cite{Gro62}); its group of components, the N\'eron-Severi 
group $\NS(X)$, is finitely generated by \cite[XIII.5.1]{SGA6}. 
The action of $\Aut(X)$ on $\Pic(X)$ via pullback extends 
to an action of the corresponding group functor, and hence 
of the corresponding locally algebraic group. 
As a consequence, for any line bundle $\pi : L \to X$ 
with class $[L] \in \Pic(X)$, the reduced stabilizer 
$\Aut(X,[L])$ is closed in $\Aut(X)$. 

Given $L$ as above, the polarization map
\[ \Aut(X) \longrightarrow \Pic(X), \quad
g \longmapsto [g^*(L) \otimes L^{-1}] \]
takes $\Aut^0(X)$ to $\Pic^0(X)$. Therefore,
$\Aut^0(X)$ acts trivially on the quotient
$\Pic(X)/\Pic^0(X) = \NS(X)$. 
This yields an action of $\pi_0 \, \Aut(X)$ on $\NS(X)$ 
and in turn, on the quotient of $\NS(X)$ by its torsion subgroup: 
the group of line bundles up to numerical equivalence, 
that we denote by $\N^1(X)$. Also, we denote by $[L]_{\num}$
the class of $L$ in $\N^1(X)$, and by $\Aut(X,[L]_{\num})$
its reduced stabilizer in $\Aut(X)$. Then 
$\Aut(X,[L]_{\num})$ contains $\Aut^0(X)$, and hence
is a closed subgroup of $\Aut(X)$, containing $\Aut(X,[L])$.

Further, recall that we have a central extension 
of locally algebraic groups
\[ 1 \longrightarrow \bG_m \longrightarrow \Aut^{\bG_m}(L)
\longrightarrow \Aut(X,[L]) \longrightarrow 1, 
\]
where $\Aut^{\bG_m}(L)$ denotes the group of 
automorphisms of the variety $L$ which commute with
the $\bG_m$-action by multiplication on the fibers of
$\pi$. For any integer $n$, the space $H^0(X,L^{\otimes n})$
is equipped with a linear representation of $\Aut^{\bG_m}(L)$,
and hence with a projective representation of $\Aut(X,[L])$.
Moreover, the natural rational map 
\[ f_n : X \dasharrow \bP \, H^0(X,L^{\otimes n}) \]
(where the right-hand side denotes the projective space 
of hyperplanes in $H^0(X,L^{\otimes n})$) is equivariant
relative to the action of $\Aut(X,[L])$. 

Recall that $L$ is \emph{big} if $f_n$ is birational onto 
its image for some $n \geq 1$. (See \cite[Lem.~2.60]{KM} 
for further characterizations of big line bundles).

\begin{lemma}\label{lem:bignef}
Let $L$ a big and nef line bundle on a normal projective variety 
$X$. Then $\Aut(X,[L]_{\num})$ is an algebraic group.
\end{lemma}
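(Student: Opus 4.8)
The plan is to show that $\Aut(X,[L]_{\num})$ has finitely many components, which suffices since $\Aut^0(X)$ is already a connected algebraic group and $\Aut(X,[L]_{\num})$ is a closed subgroup of the locally algebraic group $\Aut(X)$ containing $\Aut^0(X)$. Thus the whole problem reduces to controlling the group of components $\pi_0\,\Aut(X,[L]_{\num})$, and the key point will be to exploit that $L$ is big and nef together with the fact that this group preserves the numerical class $[L]_{\num}$.

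First I would use bigness and nefness to produce a geometric invariant that the group must respect. Since $L$ is nef, its top self-intersection number $(L^{\dim X})$ is nonnegative, and since $L$ is also big this number is strictly positive; more generally, the intersection numbers $(L^{i} \cdot D_1 \cdots D_{\dim X - i})$ give a well-defined pairing on $\N^1(X)$. The group $\Aut(X,[L]_{\num})$ acts on $\N^1(X)$ preserving the intersection form and fixing the class $[L]_{\num}$. Because $L$ is big and nef, I expect $[L]_{\num}$ to lie in the interior of the nef cone's relevant face, or at least to have the property that the quadratic form $\alpha \mapsto (L^{\dim X - 2} \cdot \alpha^2)$ is nondegenerate of bounded signature on a suitable subspace (a Hodge-index type statement). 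The plan is to invoke such a Hodge-index inequality to conclude that the stabilizer of $[L]_{\num}$ inside $\GL(\N^1(X))$ preserving the integral lattice and the intersection pairing is a finite group, or at worst an arithmetic group whose image here is finite.

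Next I would relate $\pi_0\,\Aut(X,[L]_{\num})$ to this arithmetic stabilizer. The action of $\pi_0\,\Aut(X)$ on the finitely generated group $\N^1(X)$ (constructed in the preceding paragraphs of the paper) gives a homomorphism $\pi_0\,\Aut(X,[L]_{\num}) \to \Aut(\N^1(X), [L]_{\num})$, where the target consists of automorphisms of the lattice $\N^1(X)$ fixing $[L]_{\num}$ and preserving numerical intersection. By the Hodge-index argument this target is finite. The remaining, and I expect hardest, step is to control the kernel of this homomorphism: an automorphism acting trivially on $\N^1(X)$ need not lie in $\Aut^0(X)$. To handle the kernel I would argue that automorphisms acting trivially on $\N^1(X)$ preserve the numerical class of an ample (or big and nef) divisor pointwise and hence act on the graded section ring $\bigoplus_n H^0(X, L^{\otimes n})$, or rather on its semi-ample reduction, in a way that forces boundedness; concretely, since $L$ is big, a large multiple $L^{\otimes n}$ defines a birational morphism $f_n$ onto its image, equivariant for $\Aut(X,[L])$, and one reduces to automorphisms of a projective variety fixing a very ample class, which form an algebraic group via the projective representation on $H^0(X, L^{\otimes n})$.

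The main obstacle will be bridging the gap between preserving the \emph{numerical} class $[L]_{\num}$ and preserving the genuine \emph{isomorphism} class $[L] \in \Pic(X)$, since only the latter yields the central extension $1 \to \bG_m \to \Aut^{\bG_m}(L) \to \Aut(X,[L]) \to 1$ and hence the linear representation on sections. The resolution should be that the index of $\Aut(X,[L])$ in $\Aut(X,[L]_{\num})$ is controlled by the torsion in $\NS(X)$ together with the finite stabilizer computed above, so after passing to a finite-index subgroup one lands inside $\Aut(X,[L])$ for some big and nef $L$; there a high multiple $L^{\otimes n}$ is big, the associated $f_n$ is birational onto its image, and the projective representation of $\Aut(X,[L])$ on $H^0(X, L^{\otimes n})$ realizes $\Aut(X,[L])$ as a closed subgroup of a linear algebraic group, giving algebraicity. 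Combining the finiteness of the component group obtained from the lattice action with the algebraicity of a finite-index subgroup yields that $\Aut(X,[L]_{\num})$ is an algebraic group.
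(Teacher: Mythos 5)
Your opening reduction (it suffices to show the group of components is finite) agrees with the paper, but the route you take afterwards has two genuine gaps, and the paper's actual proof avoids both by a different mechanism. The first gap is the Hodge-index step: for $L$ merely big and nef the form $\alpha \mapsto (L^{d-2}\cdot\alpha^2)$ on $\N^1(X)$ can be degenerate (take $X$ the blow-up of $\bP^3$ at a point and $L$ the pull-back of $\cO(1)$; then $L\cdot E^2 = L\cdot E\cdot H = 0$ for the exceptional divisor $E$), so finiteness of the stabilizer of $[L]_{\num}$ in the lattice does not follow from negative definiteness of a primitive part as it would for $L$ ample. The second and more serious gap is the kernel. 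Even granting a finite image in $\GL(\N^1(X))$, you must show that the automorphisms acting trivially on $\N^1(X)$ meet only finitely many components, and your plan is to pass to a finite-index subgroup contained in $\Aut(X,[L])$. But fixing $[L]_{\num}$, or even the class of $L$ in $\NS(X)$, only determines $g^*(L)\otimes L^{-1}$ modulo $\Pic^0(X)$; the resulting $\Pic^0(X)$-valued cocycle $g\mapsto [g^*(L)\otimes L^{-1}]$ need not vanish on any finite-index subgroup, because the polarization map $\Aut^0(X)\to\Pic^0(X)$ is not surjective in general. So no finite-index subgroup need land in $\Aut(X,[L])$. Moreover, even for elements of $\Aut(X,[L])$ with $L$ big but not globally generated, the homomorphism to $\PGL(H^0(X,L^{\otimes n}))$ is injective but its image is not obviously closed: in the paper that closedness is the content of Lemma \ref{lem:big}, which is deduced \emph{from} Lemma \ref{lem:bignef}, so your argument risks circularity.

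The paper's proof is quite different and entirely bypasses both the lattice action and the numerical-versus-linear-equivalence issue. It uses Kodaira's lemma to write $L^{\otimes n}\simeq A\otimes E$ with $A$ ample and $E$ effective, then bounds the degree of the graph $\Gamma_g$ with respect to $A\boxtimes A$ by the inductive intersection-number estimate $(A+g^*A)^d\le (L+g^*L)^d = 2^d\,L^d$, using only that $g^*L$ is numerically equivalent to $L$ and that nef classes meet effective ones nonnegatively. Graphs of bounded degree then form a finite-type family in the Chow scheme of $X\times X$, which forces $\Aut(X,[L]_{\num})$ to have finitely many components. If you want to salvage your approach, you would need an independent argument that numerically trivial automorphisms form a finite-index-bounded group, and that is essentially the hard content of the lemma itself.
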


\begin{proof}
It suffices to show that the locally algebraic group 
$G := \Aut(X,[L]_{\num})$ has finitely many 
components. For this, we adapt the arguments of 
\cite[Prop.~2.2]{Lieberman} and \cite[Lem.~2.23]{Zhang}. 

By Kodaira's lemma, we have $L^{\otimes n} \simeq A \otimes E$ 
for some positive integer $n \geq 1$, some ample line bundle $A$ 
and some effective line bundle $E$ on $X$ (see 
\cite[Lem.~2.60]{KM}). Since $G$ is a closed subgroup 
of $\Aut(X,[L^{\otimes n}]_{\num})$, we may assume that $n = 1$.

Consider the ample line bundle $A \boxtimes A$ on $X \times X$. 
We claim that the degrees of the graphs 
$\Gamma_g \subset X \times X$, where $g \in G$, are bounded 
independently of $g$. This implies the statement as follows:
the above graphs form a flat family of normal 
subvarieties of $X \times X$, parameterized by $G$ (a disjoint 
union of open and closed smooth varieties). This yields a morphism
from $G$ to the Hilbert scheme $\Hilb_{X \times X}$. Since
$G$ is closed in $\Aut(X)$ and the latter is the reduced subscheme
of an open subscheme of $\Hilb_{X \times X}$, this morphism 
is an immersion, say $i$. By \cite[I.6.3, I.6.6.1]{Kol99}, we may
compose $i$ with the Hilbert-Chow morphism to obtain a
local immersion $\gamma : G \to \Chow_{X \times X}$.
Clearly, $\gamma$ is injective, and hence an immersion.
Thus, the graphs $\Gamma_g$ are the $k$-rational points 
of a locally closed subvariety of $\Chow_{X \times X}$. 
Since the cycles of any prescribed degree form a subscheme 
of finite type of $\Chow_{X \times X}$, our claim yields that $G$ 
has finitely many components indeed.

We now prove this claim.
Let $d := \dim(X)$ and denote by $L_1 \cdots L_d$ the 
intersection number of the line bundles $L_1,\ldots,L_d$ 
on $X$; also, we denote the line bundles additively. 
By \cite[Thm.~9.6.3]{FGA}, $L_1 \cdots L_d$ only depends on
the numerical equivalence classes of $L_1,\ldots,L_d$.
With this notation, the degree of $\Gamma_g$ relative to 
$A \boxtimes A$ is the self-intersection number 
$(A + g^*A)^d$. We have
\[ (A + g^* A)^d = (A + g^* A)^{d-1} \cdot (L + g^* L)
- (A +g^* A)^{d-1} \cdot (E + g^* E). \]
We now use the fact that $L_1 \cdots L_{d-1} \cdot E \geq 0$
for any ample line bundles $L_1,\ldots,L_{d-1}$  
(this is a very special case of \cite[Ex.~12.1.7]{Fulton}), 
and hence for any nef line bundles $L_1,\ldots,L_{d-1}$ 
(since the nef cone is the closure of the ample cone).
It follows that 
\[ (A + g^* A)^d \leq (A + g^* A)^{d-1} \cdot (L + g^* L) \]
\[ = (A + g^* A)^{d-2} \cdot (L + g^* L)^2 - 
(A + g^* A)^{d-2} \cdot (L + g^* L) \cdot (E + g^* E). \]
Using again the above fact, this yields
\[ (A + g^* A)^d \leq (A + g^* A)^{d-2} \cdot (L + g^* L)^2. \]
Proceeding inductively, we obtain
$(A + g^* A)^d \leq (L+ g^* L)^d$.
Since $g^* L$ is numerically equivalent to $L$, this yields
the desired bound
\[ (A + g^* A)^d \leq 2^d \, L^d. \]
\end{proof}

\begin{lemma}\label{lem:big}
Let $L$ be a big line bundle on a normal projective variety 
$X$. Then $\Aut(X,[L])$ is a linear algebraic group.
\end{lemma}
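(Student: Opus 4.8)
The plan is to prove that $\Aut(X,[L])$ is a linear algebraic group by exhibiting it as a subgroup of $\Aut(X',[L']_{\num})$ for a suitable modification $X'$ of $X$, where $L'$ is big and nef, and then invoking Lemma \ref{lem:bignef}. The key observation is that a big line bundle becomes big \emph{and} nef after passing to a resolution of the rational map it defines, and this construction can be made $\Aut(X,[L])$-equivariant.

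\begin{proof}[Proof plan]
First I would pass to a positive power to reduce to a convenient situation. Since $L$ is big, there exists $n \geq 1$ such that the rational map $f_n : X \dasharrow \bP \, H^0(X, L^{\otimes n})$ is birational onto its image. As noted in the excerpt, $f_n$ is equivariant for the action of $\Aut(X,[L])$; moreover $\Aut(X,[L])$ is a closed subgroup of $\Aut(X,[L^{\otimes n}])$, so replacing $L$ by $L^{\otimes n}$ it suffices to treat the case where $f := f_1$ itself is birational onto its image. Next I would resolve the indeterminacy of $f$ equivariantly. The base scheme $\Bs(L)$ is $\Aut(X,[L])$-stable, so blowing it up (or taking the normalized graph of $f$) produces a normal projective variety $X'$ with a birational morphism $\mu : X' \to X$ on which the action of $\Aut(X,[L])$ lifts, together with a morphism $g : X' \to \bP \, H^0(X,L)$ resolving $f$. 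Because these constructions are canonical, the lifted action on $X'$ is again equivariant.

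The heart of the argument is to produce on $X'$ a big and nef line bundle whose numerical class is preserved by the lifted $\Aut(X,[L])$-action. The morphism $g$ is birational onto its image and factors through a hyperplane-class line bundle; concretely, $L' := g^*\cO(1)$ is the moving part of $\mu^* L$, it is globally generated (hence nef) by construction, and it is big because $g$ is birational onto its image. It remains to check that the lifted action fixes $[L']_{\num}$. Here I would use that the projective representation of $\Aut(X,[L])$ on $H^0(X,L)$ lifts, via $\Aut^{\bG_m}(L)$, to a linear representation, so the induced action on $\bP \, H^0(X,L)$ is by linear projective transformations preserving $\cO(1)$; pulling back along the equivariant $g$ shows that $g^* \cO(1) = L'$ has $\Aut(X,[L])$-invariant class, even in $\Pic(X')$, and a fortiori in $\N^1(X')$.

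With $X'$, $L'$ in hand, Lemma \ref{lem:bignef} shows that $\Aut(X', [L']_{\num})$ is an algebraic group. Since the lifted action realizes $\Aut(X,[L])$ inside $\Aut(X',[L']_{\num})$, and since by Lemma \ref{lem:closed} the lifting homomorphism $\rho : \Aut(X,[L]) \to \Aut(X')$ is a closed immersion, we conclude that $\Aut(X,[L])$ is a closed subgroup of an algebraic group, hence algebraic; being affine (as a stabilizer acting faithfully and linearly, up to the central $\bG_m$, on the finite-dimensional space $H^0(X,L)$), it is linear. I expect the main obstacle to be the equivariant resolution step: one must ensure that the graph closure is normal and that the lifted group action is genuinely algebraic, and verify carefully that $L'$ remains big after resolution while its class stays invariant. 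The subtlety is purely about making the naive geometric picture functorial enough that Lemma \ref{lem:closed} and Lemma \ref{lem:bignef} both apply.
\end{proof}
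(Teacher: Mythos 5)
Your proposal follows the paper's proof almost step for step: reduce to the case where $f_1$ is birational, pass equivariantly to the normalized blow-up of the base locus, split off the globally generated (hence nef) and big part $L'$ of the pullback of $L$, and conclude via Lemma \ref{lem:bignef} together with the faithful projective representation on $H^0(X,L)$. The one place where you are too quick is the appeal to Lemma \ref{lem:closed} to get that $\rho : \Aut(X,[L]) \to \Aut(X')$ has closed image. That lemma assumes the action of the \emph{whole} group $\Aut(X)$ lifts to $X'$, and its proof uses that the image then contains $\Aut^0(X')$; here only the subgroup $G = \Aut(X,[L])$ lifts (it is the subgroup stabilizing the base locus), so the lemma does not apply as stated. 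The closedness is not a formality: an injective homomorphism from a locally algebraic group into an algebraic group does not force the source to have finitely many components (think of $\bZ \hookrightarrow \bG_m$, $n \mapsto 2^n$), so without it you cannot conclude that $G$ is algebraic from Lemma \ref{lem:bignef}. The paper fixes this by \emph{adapting} the argument of Lemma \ref{lem:closed}: it factors $\rho$ through $\Aut(\tilde X,[\tilde L])$, a closed subgroup of $\Aut(\tilde X,[L'])$, and applies Blanchard's lemma to the morphism of total spaces $\tilde L \to L$ (rather than to $\tilde X \to X$) to build an inverse on neutral components; working on the total spaces is what guarantees that the descended automorphism of $X$ actually fixes $[L]$, i.e.\ lands in $G$. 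With that one step supplied, your argument is exactly the paper's.
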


\begin{proof}
Since $G := \Aut(X,[L])$ is a closed subgroup of 
$\Aut(X,[L^{\otimes n}])$ for any $n \geq 1$, 
we may assume that the rational map  
\[ f_1: X \dasharrow \bP \, H^0(X,L) \] 
is birational onto its closed image $Y_1$. Note that
$f_1$ is $G$-equivariant and the action of $G$ on 
$\bP \, H^0(X,L)$ stabilizes $Y_1$.

Consider the blowing-up of the base locus of $L$, as in 
\cite[Ex.~II.7.17.3]{Hartshorne}, and its normalization
$\tilde{X}$. Denote by
\[ \pi : \tilde{X} \longrightarrow X \]
the resulting birational morphism; then
$\pi_*(\cO_{\tilde{X}}) = \cO_X$ by Zariski's Main Theorem. 
Let $\tilde{L} := \pi^*(L)$; then 
$H^0(\tilde{X},\tilde{L}) \simeq H^0(X,L)$ and
$\tilde{L} = L' \otimes E$, where $L'$ is a line bundle generated 
by its subspace of global sections $H^0(X,L)$,
and $E$ is an effective line bundle. Thus, $L'$ is big.
Moreover, the action of $G$ on $X$ lifts to 
an action on $\tilde{X}$ which fixes both classes $[\tilde{L}]$ 
and $[L']$. We now claim that the image of the resulting
homomorphism $\rho : G \to \Aut(\tilde{X},[L'])$ is closed.

Note that $\rho$ factors through a homomorphism
\[ \eta : G \to \Aut(\tilde{X}, [\tilde{L}]) 
= \Aut(\tilde{X},[\tilde{L}], [L']), \]
where the right-hand side 
is a closed subgroup of $\Aut(\tilde{X},[L'])$. Thus, it
suffices to show that the image of $\eta$ is closed.
For this, we adapt the argument of Lemma \ref{lem:closed}.
Consider the cartesian diagram
\[ \xymatrix{
\tilde{L} \ar[r]^{\varphi} \ar[d]^{\tilde{\pi}} 
& L \ar[d]^{\pi}  \\  \tilde{X}  \ar[r]^{f} &  X.  \\
} \]
Since $f_*(\cO_{\tilde{X}}) = \cO_X$, we have
$\varphi_*(\cO_{\tilde{L}}) = \cO_L$. 
So Blanchard's lemma (see \cite[Prop.~4.2.1]{BSU})
yields a homomorphism 
$\varphi_* : \Aut^{\bG_m}(\tilde{L})^0 \to \Aut^{\bG_m}(L)^0$,
and hence a homomorphism
\[ f_* : \Aut(\tilde{X},\tilde{L})^0 \to \Aut(X,[L])^0 = G^0 \]
which is the inverse of 
$\eta^0 : G^0 \to \Aut(\tilde{X},\tilde{L})^0$. Thus,
the image of $\eta$ contains $\Aut(\tilde{X},\tilde{L})^0$.
This implies our claim.

By this claim, we may replace the pair ($X,L$) with 
($\tilde{X},L'$); equivalently, we may assume that 
the big line bundle $L$ is generated by its global sections. 
Then $\Aut(X,[L]_{\num})$ is an algebraic group by Lemma 
\ref{lem:bignef}. Since $G$ is a closed subgroup of 
$\Aut(X,[L]_{\num})$, it is algebraic as well. So the image 
of the homomorphism $G \to \Aut(\bP \, H^0(X,L)) \simeq \PGL_N$ 
is closed. As $f_1$ is birational, the scheme-theoretic kernel of 
this homomorphism is trivial; thus, $G$ is linear.

\end{proof}

Finally, we record a classical bigness criterion, for which we
could locate no reference in the generality that we need:

\begin{lemma}\label{lem:affbig}
Consider an effective Cartier divisor $D$ on a projective
variety $X$ and let $U := X \setminus \Supp(D)$. If $U$ 
is affine, then $\cO_X(D)$ is big.
\end{lemma}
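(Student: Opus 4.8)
The plan is to recover all regular functions on $U$ from sections of the powers of $\cO_X(D)$, and then to read off bigness from the fact that $U$ is an affine variety of full dimension $d := \dim(X)$. Write $L := \cO_X(D)$ and let $s \in H^0(X,L)$ be the canonical section, whose zero divisor is $D$; its non-vanishing locus is then exactly $X_s = X \setminus \Supp(D) = U$. Since $D$ is an effective Cartier divisor, $\Supp(D)$ is a proper closed subset, so $U$ is a dense open subvariety of $X$; as $U$ is affine, $U = \Spec(R)$ with $R := \cO_X(U)$ a finitely generated $k$-algebra, which is a domain with fraction field $k(X)$, and hence has transcendence degree $d$ over $k$.

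The key step is the identification, inside $k(X)$,
\[ R \;=\; \bigcup_{n \geq 0} s^{-n} \, H^0(X, L^{\otimes n}). \]
The inclusion $\supseteq$ is clear, since for $t \in H^0(X,L^{\otimes n})$ the quotient $t/s^n$ is a rational function that is regular on $U = X_s$, where $s$ is invertible. For the reverse inclusion I would invoke the standard extension lemma for sections on a non-vanishing locus, valid for any quasi-compact separated scheme (see \cite[Lem.~II.5.14]{Hartshorne}): given $g \in \Gamma(X_s, \cO_X) = R$, there is an $n$ such that $s^n g$ extends to a global section of $L^{\otimes n}$, whence $g \in s^{-n} H^0(X,L^{\otimes n})$. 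I expect this to be the main point, because it is exactly where one must work in the stated generality: $X$ is only assumed projective, so one cannot argue via valuations on a normal model, whereas the cited lemma requires no normality hypothesis.

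Granting the identification, bigness follows from a transcendence-degree count. I would choose $g_1, \dots, g_d \in R$ algebraically independent over $k$; by the displayed equality, after clearing denominators with a common power I may write $g_i = t_i/s^m$ with a single $m \geq 1$ and $t_i \in H^0(X,L^{\otimes m})$. Since $s^m$ is itself a section of $L^{\otimes m}$, the $g_i$ are genuine ratios of sections, hence lie in the function field of the image of $f_m : X \dasharrow \bP \, H^0(X, L^{\otimes m})$; being algebraically independent, they force this image to have dimension at least $d$, and as it has dimension at most $\dim(X) = d$, it has dimension exactly $d$. Finally, the subfields of $k(X)$ generated by the ratios of sections of $L^{\otimes m}$ increase with $m$ (via multiplication by $s$), and their union is the fraction field of $R$, namely $k(X)$; since $k(X)$ is finitely generated over $k$, it is already reached at some finite stage, so $f_m$ is birational onto its image for $m \gg 0$. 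Hence $L = \cO_X(D)$ is big.
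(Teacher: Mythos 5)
Your proof is correct and follows essentially the same route as the paper's: both hinge on the canonical section $s$ with $U = X_s$ and on the extension lemma realizing every regular function on $U$ as $t/s^n$ for some $t \in H^0(X,L^{\otimes n})$. The only difference is in the last step --- the paper concludes by noting that $f_n$ restricts to a closed immersion of $U$ into the affine chart where $s^n \neq 0$ (using finitely many algebra generators of $\cO(U)$), whereas you conclude via the function field of the image; your intermediate transcendence-degree count is redundant given your final paragraph, but nothing is wrong.
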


\begin{proof}
Denote by $s \in H^0(X,\cO_X(D))$ the canonical section,
so that $U = X_s$ (the complement of the zero locus of $s$).
Let $h_1,\ldots,h_r$ be generators of the $k$-algebra
$\cO(U)$. Then there exist positive integers $n_1,\ldots,n_r$ 
such that $h_i \, s^{n_i}  \in H^0(X, \cO_X(n_i D))$ for 
$i = 1,\ldots,r$. So $h_i \,s^n \in H^0(X,\cO_X(nD))$ 
for any $n \geq n_1,\ldots,n_r$.
It follows that the rational map 
$f_n : X \dasharrow \bP \, H^0(X,\cO_X(nD))$ 
restricts to a closed immersion
$U \to \bP \, H^0(X,\cO_X(nD))_{s^n}$,
and hence is birational onto its image.
\end{proof}

\section{Proof of Theorem \ref{thm:ahv}}
\label{sec:ahv}

We first obtain a characteristic-free analogue of
the Tits fibration:

\begin{lemma}\label{lem:tits}
Let $G$ be a connected linear algebraic group, and 
$H$ a subgroup scheme. For any $n \geq 1$, denote 
by $G_n$ (resp.~$H_n$) the $n$-th infinitesimal neighborhood 
of the neutral element $e$ in $G$ (resp.~$H$).

\begin{enumerate}

\item[{\rm (i)}] The union of the $G_n$ ($n \geq 1$)
is dense in $G$.

\item[{\rm (ii)}] For $n \gg 0$, we have the equality 
$N_G(H^0) = N_G(H_n)$ of
scheme-theoretic normalizers.

\item[{\rm (iii)}] The canonical morphism
$f : G/H \to G/N_G(H^0)$ is affine.

\end{enumerate}

\end{lemma}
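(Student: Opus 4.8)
The plan is to prove the three parts essentially independently, since they concern different aspects of the infinitesimal structure of $H$ and its normalizer.

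For part (i), I would argue that the union $\bigcup_{n \geq 1} G_n$ is a subgroup-like filtration whose associated ideals shrink to zero. Concretely, at the neutral element $e \in G$, the local ring $\cO_{G,e}$ is noetherian with maximal ideal $\fm$, and the $n$-th infinitesimal neighborhood $G_n$ is defined by $\fm^{n+1}$. By Krull's intersection theorem, $\bigcap_n \fm^{n+1} = 0$, so no proper closed subscheme of $G$ containing a neighborhood of $e$ can contain all the $G_n$. Since $G$ is a variety (integral), its only closed subscheme containing a nonempty open set is $G$ itself; because the $G_n$ accumulate at $e$ with vanishing defining ideals, their scheme-theoretic union has closure equal to $G$. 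I expect this to be the quickest part.

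For part (ii), the key observation is that $N_G(H^0)$ and the $N_G(H_n)$ form a decreasing (in $H_n$, hence a priori varying) family of closed subgroup schemes of $G$, and I would show they stabilize. First note $H^0$ is the reduced identity component and $H_n \subset H^0$ for all $n$ once $n$ is large enough that $H_n$ is supported at $e$ (indeed $H_n$ is always infinitesimal, supported at $e$, hence contained in $H^0$). A subgroup scheme normalizes $H^0$ if and only if it normalizes $H^0$'s Lie algebra filtration; more usefully, since $H^0$ is determined by its infinitesimal neighborhoods $(H^0)_n = H_n$ (the two agree near $e$), the condition $g H^0 g^{-1} = H^0$ is equivalent to $g H_n g^{-1} = H_n$ for all $n$. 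Thus $N_G(H^0) = \bigcap_n N_G(H_n)$. Now each $N_G(H_n)$ is a closed subgroup scheme of $G$, and $N_G(H_1) \supseteq N_G(H_2) \supseteq \cdots$ is not automatic, so instead I would use the noetherian property: the descending chain of scheme-theoretic intersections $\bigcap_{m \leq n} N_G(H_m)$ stabilizes, giving $N_G(H^0) = N_G(H_n)$ for $n \gg 0$. I expect this stabilization to be the main obstacle, since one must argue at the level of scheme-theoretic (not merely reduced) normalizers, controlling nilpotents carefully.

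For part (iii), I would reduce to showing that the fibers of $f: G/H \to G/N_G(H^0)$ are affine and that $f$ is, in a suitable sense, a fiber bundle with affine fiber. The fiber over the base point is $N_G(H^0)/H$. By part (ii), for $n \gg 0$ we have $N_G(H^0) = N_G(H_n)$, and the normalizer $N_G(H_n)$ acts on $H_n$ by conjugation, hence on the finite-dimensional vector space $\Lie(H_n)$ or on the coordinate ring of $H_n$; this linearizes the action and exhibits $N_G(H_n)$ as the stabilizer of a point (the class of $H$, equivalently a tensor encoding $\Lie(H)$) in a suitable Grassmannian or representation. The quotient $N_G(H^0)/H$ then embeds into an affine space (or an affine cone) via this linear data, making it affine. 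Since $G \to G/N_G(H^0)$ is locally trivial in the fppf or étale topology with fiber $N_G(H^0)$, the associated fibration $G/H = G \times^{N_G(H^0)} (N_G(H^0)/H)$ has affine fibers over $G/N_G(H^0)$, and affineness of a morphism is local on the base, so $f$ is affine. The delicate point here is packaging the conjugation action on the infinitesimal neighborhood $H_n$ as a linear representation whose stabilizer of the relevant point is exactly $N_G(H^0)$, so that $N_G(H^0)/H$ is realized as a closed (hence affine) subvariety of an affine quotient.
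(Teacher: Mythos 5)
Your overall architecture matches the paper's (Krull intersection for (i), a stabilizing chain for (ii), reduction to an affine fiber for (iii)), but there are genuine gaps, the most serious in (ii). You decline to use monotonicity of the chain $N_G(H_1)\supseteq N_G(H_2)\supseteq\cdots$, calling it ``not automatic,'' and instead stabilize the partial intersections $\bigcap_{m\leq n}N_G(H_m)$. That only yields $N_G(H^0)=\bigcap_{m\leq n}N_G(H_m)$ for $n\gg 0$, which is strictly weaker than the asserted equality $N_G(H^0)=N_G(H_n)$ with a \emph{single} normalizer --- and the single-normalizer form is what the proof of Theorem \ref{thm:ahv} actually uses, since there $N_G(H^0)$ must be the full stabilizer of the one point of $\Grass(\cO(G_n))$ given by the ideal of $H_n$. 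In fact the monotonicity is automatic: conjugation by any (schematic) point of $N_G(H_n)$ is an automorphism of $G$ fixing $e$, hence preserves $G_{n-1}$, and $H_{n-1}=H_n\cap G_{n-1}=(H_n)_{n-1}$, so $N_G(H_n)\subseteq N_G(H_{n-1})$. The chain of normalizers themselves then stabilizes at some $n_0$, its stable value normalizes $H_m$ for all $m\geq n_0$, and hence normalizes $H^0$ by the density in (i) applied to $H^0$. Without this observation your argument does not prove the statement as written.

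Two further problems. In (iii), your justification that the fiber $N_G(H^0)/H$ is affine is aimed at the wrong quotient: exhibiting $N_G(H_n)$ as the stabilizer of a point of a Grassmannian controls $G/N_G(H^0)$, not $N_G(H^0)/H$, and a (locally closed) embedding into an affine space does not by itself give affineness. The correct and easy reason is that $H^0$ is normal in $N_G(H^0)$ by definition of the normalizer, so $N_G(H^0)/H^0$ is an affine algebraic group and $N_G(H^0)/H$ is its quotient by the finite group scheme $H/H^0$, hence affine; with that supplied, your descent argument (fppf-local triviality of the $N_G(H^0)$-torsor plus descent of affineness --- note that ``affine fibers'' alone is not enough) does prove (iii) and is a legitimate alternative to the paper's route, which factors $f$ through $G/H^0$ and takes invariants of an affine open under the finite constant group $H/H^0$. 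Finally, in (i) you assume $G$ is integral. In this paper an algebraic group is a group scheme of finite type, so in positive characteristic it need not be reduced; worse, in the proof of (ii) one must apply (i) to $H^0$, which can be non-reduced (even infinitesimal). The paper handles this by writing $\cO(G)\simeq\cO(G_{\red})\otimes A$ with $A$ finite local and reducing to the smooth case; your Krull-intersection argument in the local ring at $e$ needs this supplement to conclude $\bigcap_{n}\fm^n=0$ in the full coordinate ring.
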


\begin{proof}
(i) Denote by $\fm \subset \cO(G)$ the maximal ideal
of $e$; then 
\[ G_n = \Spec(\cO(G)/\fm^n) \] 
for all $n$. Thus, the assertion is equivalent to 
$\bigcap_{n \geq 1} \fm^n = 0$.
This is proved in \cite[I.7.17]{Jantzen}; we recall 
the argument for the reader's convenience.
If $G$ is smooth, then $\cO(G)$ is a noetherian domain,
hence the assertion follows from Nakayama's lemma.
For an arbitrary $G$, we have an isomorphism of algebras  
\[ \cO(G) \simeq  \cO(G_{\red}) \otimes A, \] 
where $A$ is a local $k$-algebra of finite dimension
as a $k$-vector space (see \cite[III.3.6.4]{DG}). Thus, 
$\fm = \fm_1 \otimes 1 + 1 \otimes \fm_2$,
where $\fm_1$ (resp.~$\fm_2$) denotes the maximal ideal 
of $e$ in $\cO(G_{\red})$ (resp.~the maximal ideal of
$A$). We may choose an integer $N \geq 1$
such that $\fm_2^N = 0$; then 
$\fm^n \subset \fm_1^{n - N} \otimes A$ for all $n \geq N$.
Since $G_{\red}$ is smooth and connected, we have
$\bigcap_{n \geq 1} \fm_1^n = 0$ by the above step;
this yields the assertion.

(ii) Since $H_n = H^0 \cap G_n$ and $G$ normalizes $G_n$, 
we have $N_G(H^0) \subset N_G(H_n)$. 
To show the opposite inclusion, note that 
$(H_n)_{n-1} = H_{n-1}$, hence we have
$N_G(H_n) \subset N_G(H_{n-1})$. This decreasing
sequence of closed subschemes of $G$ stops, say
at $n_0$. Then $N_G(H_{n_0})$ normalizes 
$H_n$ for all $n \geq n_0$. In view of (i),
it follows that $N_G(H_{n_0})$ normalizes $H^0$.

(iii) We have a commutative triangle
\[ \xymatrix{
G/H^0 \ar[rd]^-{\psi} \ar[d]_{\varphi} \\
G/H \ar[r]^-{f} & G/N_G(H^0), \\
} \]
where $\varphi$ is a torsor under $H/H^0$ (a finite
constant group), and $\psi$ is a torsor under $N_G(H^0)/H^0$
(a linear algebraic group). In particular, $\psi$ is affine.
Let $U$ be an open affine subscheme of $G/N_G(H^0)$. 
Then $\psi^{-1}(U) \subset G/H^0$ is open, affine and
stable under $H/H^0$. Hence 
$f^{-1}(U) = \varphi (\psi^{-1}(U))$ is affine.
\end{proof}

\begin{remark}\label{rem:tits}
(i) The first infinitesimal neighborhood $G_1$
may be identified with the Lie algebra $\fg$
of $G$; thus, $G_1 \cap H^0 = H_1$
is identified with the Lie algebra $\fh$ of $H$. 

If $\charac(k) = 0$, then $N_G(H^0) = N_G(\fh)$,
since every subgroup scheme of $G$ is uniquely
determined by its Lie subalgebra (see 
e.g.~\cite[II.6.2.1]{DG}). As a consequence, the morphism
$f: G/H \to G/N_G(H^0)$ is the $\fg$-anticanonical 
fibration considered in \cite[I.2.7]{HO} (see also
\cite[\S 4]{Haboush}).

By contrast, if $\charac(k) > 0$ then the natural morphism 
$G/H \to G/N_G(\mathfrak{h})$ is not necessarily affine 
(see e.g.~\cite[Ex.~5.6]{Br10}). In particular, the inclusion 
$N_G(H^0) \subset N_G(\fh)$ may be strict.

\smallskip \noindent
(ii) If $\charac(k) = p > 0$, then $G_{p^n}$ is the
$n$th Frobenius kernel of $G$, as defined for example in 
\cite[I.9.4]{Jantzen}; in particular, $G_{p^n}$ is a normal 
infinitesimal subgroup scheme of $G$. Then the above 
assertion (i) just means that the union of the iterated 
Frobenius kernels is dense in $G$. 
\end{remark}

We may now prove Theorem \ref{thm:ahv}. Recall its assumptions:
$X$ is a normal projective variety on which a smooth connected 
linear algebraic group $G$ acts with an open dense orbit. 
The variety $X$ is unirational in view of \cite[Thm.~18.2]{Bor91}; 
thus, its Albanese variety is trivial. By duality, it follows that the
Picard variety $\Pic^0(X)$ is trivial as well (see \cite[9.5.25]{FGA}).
Therefore, $\Pic(X) = \NS(X)$ is fixed pointwise by $\Aut^0(X)$. 
Using Lemma \ref{lem:big}, this implies that $\Aut^0(X)$
is linear. We may thus assume that $G = \Aut^0(X)$; 
in particular, $G$ is a normal subgroup of $\Aut(X)$.

Denote by $X_0 \subset X$ the open $G$-orbit; then $X_0$
is normalized by $\Aut(X)$. Choose $x \in X_0(k)$ and 
denote by $H$ its scheme-theoretic stabilizer in $G$. 
Then we have $X_0 = G \cdot x \simeq G/H$ equivariantly 
for the $G$-action. We also have 
$X_0 = \Aut(X) \cdot x \simeq \Aut(X)/\Aut(X,x)$
equivariantly for the $\Aut(X)$-action.
As a consequence, $\Aut(X) = G \cdot \Aut(X,x)$.

Next, choose a positive integer $n$ such that
$N_G(H^0) = N_G(H_n)$ (Lemma \ref{lem:tits}).
The action of $\Aut(X)$ on $G$ by conjugation 
normalizes $G_n$ and induces a linear representation
of $\Aut(X)$ in $V := \cO(G_n)$, a finite-dimensional
vector space. The ideal of $H_n$ is a subspace 
$W \subset V$, and its stabilizer in $G$ equals 
$N_G(H^0)$. We consider $W$ as a $k$-rational point of 
the Grassmannian $\Grass(V)$ parameterizing linear
subspaces of $V$ of the appropriate dimension. The linear
action of $\Aut(X)$ on $V$ yields an action on $\Grass(V)$.
The subgroup scheme $\Aut(X,x)$ fixes $W$, since
it normalizes $\Aut(X,x)^0 = H^0$. Thus, we obtain
\[ \Aut(X) \cdot W = G \cdot \Aut(X,x) \cdot W 
= G \cdot W \simeq G/N_G(H^0). \]
As a consequence, the morphism $f : G/H \to G/N_G(H^0)$
yields an $\Aut(X)$-equivariant morphism
\[ \tau : X_0 \longrightarrow Y, \]
where $Y$ denotes the closure of $\Aut(X) \cdot W$ in 
$\Grass(V)$. 

We may view $\tau$ as a rational map $X \dasharrow Y$. 
Let $X'$ denote the normalization of the graph of 
this rational map, i.e., of the closure of $X_0$ 
embedded diagonally in $X \times Y$. Then $X'$ is 
a normal projective variety equipped with an action of 
$\Aut(X)$ and with an equivariant morphism 
$f : X' \to X$ which restricts to an isomorphism
above the open orbit $X_0$. By Lemma \ref{lem:closed}, 
the image of $\Aut(X)$ in $\Aut(X')$ is closed; thus, 
it suffices to show that $\Aut(X')$ is a linear 
algebraic group. So we may assume that $\tau$ 
extends to a morphism $X \to Y$, that we will still 
denote by $\tau$. 

Next, consider the boundary, 
$\partial X := X \setminus X_0$, that we view as 
a closed reduced subscheme of $X$; it is normalized
by $\Aut(X)$. Thus, the action of $\Aut(X)$ on $X$ 
lifts to an action on the blowing-up of $X$ along
$\partial X$, and on its normalization. Using 
Lemma \ref{lem:closed} again, we may further
assume that $\partial X$ is the support of an
effective Cartier divisor $\Delta$, normalized
by $\Aut(X)$; thus, the line bundle $\cO_X(\Delta)$ 
is $\Aut(X)$-linearized.

We also have an ample, $\Aut(X)$-linearized line 
bundle $M$ on $Y$, the pull-back of $\cO(1)$ under
the Pl\"ucker embedding of $\Grass(V)$. Thus, there 
exist a positive integer $m$ and
a nonzero section $t \in H^0(Y,M^{\otimes m})$
which vanishes identically on the boundary
$\partial Y := Y \setminus G \cdot W$. 
Then $L := \tau^*(M)$ is an $\Aut(X)$-linearized
line bundle on $X$, equipped with a nonzero
section $s := \tau^*(t)$ which vanishes identically
on $\tau^{-1}(\partial Y) \subset \partial X$.
Denote by $D$ (resp.~$E$) the divisor of zeroes
of $s$ (resp.~$t$). Then $D + \Delta$ is an
effective Cartier divisor on $X$, and we have
\[ X \setminus \Supp(D + \Delta) = 
X_0 \setminus \Supp(D) = 
f^{-1}(G \cdot W \setminus \Supp(E)).\]
Since $\partial Y \subset \Supp(E)$, we have
$G \cdot W \setminus \Supp(E) = Y \setminus \Supp(E)$.
The latter is affine as $M$ is ample. Since the morphism 
$f$ is affine (Lemma \ref{lem:tits}), it follows
that $X \setminus \Supp(D + \Delta)$ is affine
as well. Hence $D + \Delta$ is big (Lemma \ref{lem:affbig}).
Also, $\cO_X(D + \Delta) = L\otimes \cO_X(\Delta)$ is
$\Aut(X)$-linearized. In view of Lemma \ref{lem:big}, 
we conclude that $\Aut(X)$ is a linear algebraic group.

\section{Proof of Theorem \ref{thm:inf}}
\label{sec:inf}

By \cite[Thm.~2]{Br10}, there exists a $G$-equivariant morphism
\[ f: X \longrightarrow G/H \]
for some subgroup scheme $H \subset G$ such that 
$H \supset G_{\aff}$ and $H/G_{\aff}$ is finite; equivalently,
$H$ is affine and $G/H$ is an abelian variety. 
Then the natural map $A = G/G_{\aff} \to G/H$
is an isogeny. Denote by $Y$ the scheme-theoretic fiber of $f$ 
at the origin of $G/H$; then $Y$ is normalized by $H$,
and the action map 
\[ G \times Y \longrightarrow X, \quad (g,y) \longmapsto g \cdot y \]
factors through an isomorphism 
\[ G \times^H Y \stackrel{\simeq}{\longrightarrow} X, \] 
where $G \times^H Y$ denotes the quotient of $G \times Y$
by the action of $H$ via 
\[ h \cdot (g,y) := (gh^{-1}, h \cdot y). \]
This is the fiber bundle associated with the faithfully flat 
$H$-torsor $G \to G/H$ and the $H$-scheme $Y$. The above 
isomorphism identifies $f$ with the morphism 
$G \times^H Y \to G/H$ obtained from the projection 
$G \times Y \to G$.

We now obtain a reduction to the case where $G$ is
\emph{anti-affine}, i.e., $\cO(G) = k$. 
Recall that $G$ has a largest anti-affine subgroup scheme
$G_{\ant}$; moreover, $G_{\ant}$ is smooth, connected and
centralizes $G$ (see \cite[III.3.8]{DG}). We have the
Rosenlicht decomposition $G = G_{\ant} \cdot G_{\aff}$;
further, the scheme-theoretic intersection 
$G_{\ant} \cap G_{\aff}$ contains $(G_{\ant})_{\aff}$, and 
the quotient $(G_{\ant} \cap G_{\aff}) /(G_{\ant})_{\aff}$ 
is finite (see \cite[Thm.~3.2.3]{BSU}). As a consequence,
\[ G = G_{\ant} \cdot H \simeq 
(G_{\ant} \times H)/ (G_{\ant} \cap H) \quad \text{and}
\quad G/H \simeq G_{\ant}/(G_{\ant} \cap H). \]
Thus, we obtain an isomorphism of schemes
\begin{equation}\label{eqn:var} 
G_{\ant} \times^{G_{\ant} \cap H} Y 
\stackrel{\simeq}{\longrightarrow} X,
\end{equation} 
and an isomorphism of abstract groups
\begin{equation}\label{eqn:gp}
\Aut^H_{\gp}(G) \stackrel{\simeq}{\longrightarrow} 
\Aut^{G_{\ant}\cap H}_{\gp}(G_{\ant}). 
\end{equation}
 
Next, we construct an action of the subgroup 
$\Aut^H_{\gp}(G) \subset \Aut^{G_{\aff}}_{\gp}(G)$ on $X$.
Let $\gamma \in \Aut^H(G)$. Then $\gamma \times \id$
is an automorphism of $G \times Y$, equivariant under the above
action of $H$. Moreover, the quotient map 
\[ \pi : G \times Y \longrightarrow G \times^H Y \] 
is a faithfully flat $H$-torsor, and hence a categorical quotient. 
It follows that there is a unique automorphism 
$\delta$ of $G \times^H Y = X$ such that the diagram
\[ \xymatrix{
G \times Y \ar[r]^{\gamma \times \id} \ar[d]_{\pi} & 
G \times Y \ar[d]^{\pi} \\ X  \ar[r]^{\delta} &  X  \\
} \]
commutes. Clearly, the assignement 
$\gamma \mapsto \delta$ defines a homomorphism
of abstract groups
\[ \varphi : \Aut^H_{\gp}(G) \longrightarrow \Aut(X). \]
We also have a natural homomorphism
\[ 
\psi : \Aut^H_{\gp}(G) \longrightarrow \Aut_{\gp}(G/H). 
\]
By construction, $f$ is equivariant under the
action of $\Aut^H_{\gp}(G)$ on $X$ via $\varphi$,
and its action on $G/H$ via $\psi$. Moreover,
we have in $\Aut(X)$
\[ \varphi(\gamma) \, g \, \varphi(\gamma)^{-1} 
= \gamma(g) \]
for all $\gamma \in \Aut^H_{\gp}(G)$ and $g \in G$.
In particular, the image of the homomorphism $\varphi$ 
normalizes $G$.

\begin{lemma}\label{lem:psi}
With the above notation, $\psi$ is injective.
Moreover, $\Aut^H_{\gp}(G)$ is a subgroup of finite index 
of $\Aut^{G_{\aff}}_{\gp}(G)$.
\end{lemma}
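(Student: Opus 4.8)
The plan is to prove the two assertions separately, both resting on one elementary fact: a morphism from an anti-affine group $B$ to an affine scheme is constant, since $\cO(B) = k$. I would settle the injectivity of $\psi$ first. Given $\gamma \in \Aut^H_{\gp}(G)$ with $\psi(\gamma) = \id$, set $c(g) := g^{-1}\gamma(g)$. As $\gamma$ induces the identity on $G/H$, the morphism $c$ factors through the fibre of $G \to G/H$ over the base point, namely $H$; thus $c : G \to H$ with $c(e) = e$. Its restriction to $G_{\ant}$ is a morphism from an anti-affine group to the affine group $H$, hence constant, hence equal to $e$. So $\gamma$ fixes $G_{\ant}$ pointwise; as it also fixes $G_{\aff} \subseteq H$ pointwise and $G = G_{\ant} \cdot G_{\aff}$ by the Rosenlicht decomposition, we conclude $\gamma = \id$.

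For the finite-index statement I would first reduce to the anti-affine case. The isomorphism \eqref{eqn:gp}, together with the same construction applied to $G_{\aff}$ in place of $H$, gives compatible identifications of $\Aut^H_{\gp}(G)$ with $\Aut^{G_{\ant}\cap H}_{\gp}(G_{\ant})$ and of $\Aut^{G_{\aff}}_{\gp}(G)$ with $\Aut^{G_{\ant}\cap G_{\aff}}_{\gp}(G_{\ant})$. Writing $B := G_{\ant}$ (anti-affine, hence commutative, see \cite{BSU}), $L := (G_{\ant})_{\aff}$, and $K_1 := B \cap G_{\aff} \subseteq K_2 := B \cap H$, one has $L \subseteq K_1 \subseteq K_2$ with all successive quotients finite, so that $F_1 := K_1/L \subseteq F_2 := K_2/L$ are finite subgroup schemes of the abelian variety $\tA := B/L$. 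Exactly as above, an automorphism of $B$ fixing $L$ pointwise and inducing the identity on $\tA$ is trivial, so $\Aut^{K_2}_{\gp}(B) \subseteq \Aut^{K_1}_{\gp}(B)$ both embed into $\Aut_{\gp}(\tA) = \End(\tA)^\times$ via $\gamma \mapsto \bar\gamma$.

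Now I would locate a finite-index subgroup of $\Aut^{K_1}_{\gp}(B)$ lying inside $\Aut^{K_2}_{\gp}(B)$, in two steps. Choose $N$ with $F_2 \subseteq \tA[N]$. An automorphism $\sigma$ restricts to the identity on $\tA[N]$ exactly when $\sigma - \id \in N\End(\tA)$, so $\{\sigma : \sigma|_{\tA[N]} = \id\}$ is the kernel of the reduction $\End(\tA)^\times \to (\End(\tA)/N\End(\tA))^\times$ to a finite group, and hence has finite index. Thus $\Phi := \{\gamma \in \Aut^{K_1}_{\gp}(B) : \bar\gamma|_{\tA[N]} = \id\}$ has finite index in $\Aut^{K_1}_{\gp}(B)$, and every $\gamma \in \Phi$ fixes $F_2$ pointwise. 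For such $\gamma$ and $x \in K_2$ one has $\gamma(x)x^{-1} \in L$, and commutativity of $B$ makes $x \mapsto \gamma(x)x^{-1}$ a homomorphism $K_2 \to L$ vanishing on $K_1 \supseteq L$; it descends to a homomorphism $\bar c_\gamma : F_2 \to L$ of group schemes. The assignment $\gamma \mapsto \bar c_\gamma$ is a group homomorphism $\Phi \to \Hom_{\gp}(F_2, L)$ whose kernel is $\Phi \cap \Aut^{K_2}_{\gp}(B)$. Once $\Hom_{\gp}(F_2, L)$ is known to be finite, this kernel has finite index in $\Phi$, hence in $\Aut^{K_1}_{\gp}(B)$, and being contained in $\Aut^{K_2}_{\gp}(B)$ it forces the latter to have finite index as well.

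The one genuinely delicate point — and the main obstacle — is the finiteness of $\Hom_{\gp}(F_2, L)$, which fails for a general finite group scheme mapping to a general commutative affine group (for instance $\Hom_{\gp}(\bZ/p, \bG_a)$ is infinite in characteristic $p$). This is precisely where the reduction to the anti-affine group $B$ pays off, through its structure theory (see \cite{BSU}): in positive characteristic $B$ is semi-abelian, so $L$ is a torus and $\Hom_{\gp}(F_2, L)$ is finite; in characteristic $0$ the scheme $F_2$ is étale while the unipotent part of $L$ is torsion-free, so homomorphisms from $F_2$ factor through the finite toral part. Hence $\Hom_{\gp}(F_2, L)$ is finite in all characteristics. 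I expect verifying these structural inputs (the commutativity and the semi-abelian property of anti-affine groups) to require the most care, the remaining steps being formal once the constant-morphism principle and the identity $\sigma|_{\tA[N]} = \id \Leftrightarrow \sigma - \id \in N\End(\tA)$ are in hand.
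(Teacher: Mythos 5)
Your proof is correct. The injectivity argument is essentially the paper's: both rest on the fact that there is no nonconstant morphism from an anti-affine group (equivalently, no nonzero homomorphism from the abelian variety $G/H$) to the affine group $H$; you merely package it via the cocycle $g \mapsto g^{-1}\gamma(g)$ on $G$ itself instead of first reducing to the anti-affine, commutative case and writing $\gamma - \id$. For the finite-index assertion, however, you take a genuinely different route. The paper enlarges $H$ to $G_{\aff}\cdot G[n]$ (using \cite[Thm.~1.1]{Br15} to write $H = G_{\aff}\cdot F$ with $F$ finite of order $n$, plus the finiteness of $G[n]$), and then observes that $\Aut^H_{\gp}(G)$ is the kernel of the restriction map $\Aut^{G_{\aff}}_{\gp}(G)\to\Aut_{\gp}(G[n])$, whose image is finite because it lies in the image of $\End_{\gp}(G)\to\End_{\gp}(G[n])$, a finitely generated and $n$-torsion abelian group. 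You instead descend to the abelian variety $\tA = B/L$, cut out a congruence subgroup $\{\sigma : \sigma\equiv\id \bmod N\End(\tA)\}$ of finite index, and then must repair the discrepancy between fixing $F_2\subset\tA$ and fixing $K_2\subset B$ via the homomorphism $\gamma\mapsto\bar c_\gamma$ into $\Hom_{\gp}(F_2,L)$ — whose finiteness is exactly where the structure theory of anti-affine groups (semi-abelian in characteristic $p$, hence no unipotent part; \'etale source versus torsion-free unipotent target in characteristic $0$) enters, as you correctly flag. What each approach buys: the paper's stays entirely inside $G$, is shorter, and needs only the finiteness of $G[n]$ and the finite generation of $\End_{\gp}(G)$; yours avoids \cite[Thm.~1.1]{Br15} altogether and makes the arithmetic nature of $\Aut^H_{\gp}(G)$ more visible (a congruence-type subgroup of $\End(\tA)^{\times}$ up to finite index, which dovetails with Remark \ref{rem:inf}), at the cost of the extra $\Hom_{\gp}(F_2,L)$ step and its case analysis on the characteristic.
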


\begin{proof}
To show both assertions, we may assume that $G$ 
is anti-affine by using the isomorphism (\ref{eqn:gp}).
Then $G$ is commutative and hence its endomorphisms 
(of algebraic group) form a ring, $\End_{\gp}(G)$. 
Let $\gamma \in \Aut^H_{\gp}(G)$
such that $\psi(\gamma) = \id_{G/H}$; then 
$\gamma - \id_G \in \End_{\gp}(G)$ takes $G$ to $H$,
and $H$ to the neutral element.
Thus, $\gamma - \id_G$ factors through a homomorphism
$G/H \to H$; but every such homomorphism is trivial,
since $G/H$ is an abelian variety and $H$ is affine.
So $\gamma - \id_G = 0$, proving the first assertion.

For the second assertion, we may replace $H$
with any larger subgroup scheme $K$ such that 
$K/G_{\aff}$ is finite. By \cite[Thm.~1.1]{Br15}, 
there exists a finite subgroup scheme $F \subset G$ 
such that $H = G_{\aff} \cdot F$. Let $n$ denote 
the order of $F$; then $F$ is contained in the 
$n$-torsion subgroup scheme $G[n]$, and hence
$H \subset G_{\aff} \cdot G[n]$.

We now claim that $G[n]$ is finite for any integer $n > 0$. 
If $\charac(k) = p > 0$, then $G$ is a semi-abelian variety 
(see \cite[ Prop.~5.4.1]{BSU})
and the claim follows readily. If $\charac(k) = 0$, then
$G$ is an extension of a semi-abelian variety by a vector
group $U$ (see \cite[\S 5.2]{BSU}. Since the multiplication 
map $n_U$ is an isomorphism, we have $G[n] \simeq (G/U)[n]$; 
this completes the proof of the claim.

By this claim, we may replace $H$ with the larger subgroup
scheme $G_{\aff} \cdot G[n]$ for some integer $n > 0$. 
Then the restriction map 
\[ \rho : \Aut_{\gp}^{G_{\aff}}(G) \to \Aut_{\gp}(G[n]) \]
has kernel $\Aut^H_{\gp}(G)$. Thus, it suffices to show 
that $\rho$ has a finite image. 

Note that the image of $\rho$ is contained in the image of 
the analogous map $\End_{\gp}(G) \to \End_{\gp}(G[n])$.
Moreover, the latter image is a finitely generated abelian group 
(since so is $\End_{\gp}(G)$ in view of \cite[Lem.~5.1.3]{BSU}) 
and is $n$-torsion (since so is $\End_{\gp}(G[n])$). 
This completes the proof.
\end{proof}

\begin{lemma}\label{lem:varphi}
With the above notation, $\varphi$ is injective.
Moreover, its image is the subgroup of 
$\Aut(X)$ which normalizes $G$ and centralizes $Y$; 
this subgroup intersects $G$ trivially.
\end{lemma}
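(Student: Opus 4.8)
The plan is to prove the three assertions in turn, using throughout the identity $\varphi(\gamma)\,g\,\varphi(\gamma)^{-1}=\gamma(g)$ and the description of $\varphi(\gamma)$ as the automorphism induced by $\gamma\times\id$ on $X=G\times^H Y$; I will write $[g,y]$ for the image of $(g,y)$ under $\pi$, so that the defining commutative square gives $\varphi(\gamma)([g,y])=[\gamma(g),y]$, and so that $X=G\cdot Y$. \emph{Injectivity} is then immediate: if $\varphi(\gamma)=\id_X$, the conjugation identity forces $\gamma(g)=g$ for all $g\in G$, whence $\gamma=\id_G$. The \emph{easy inclusion} is equally formal. Each $\varphi(\gamma)$ normalizes $G$, since conjugation by it restricts on $G$ to the automorphism $\gamma$; and it centralizes $Y$, because on the fiber $Y=\{[e,y]\}$ over the base point $o\in G/H$ one has $\varphi(\gamma)([e,y])=[\gamma(e),y]=[e,y]$. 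Hence $\varphi(\Aut^H_{\gp}(G))$ lies in the subgroup $N\subset\Aut(X)$ of automorphisms that normalize $G$ and restrict to the identity on $Y$.

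The heart of the argument, and the step I expect to be the main obstacle, is the reverse inclusion $N\subseteq\varphi(\Aut^H_{\gp}(G))$; it hinges on the claim that $H$ \emph{acts faithfully on $Y$}, i.e. that the kernel $K$ of the action homomorphism $H\to\Aut(Y)$ is trivial. The key observation is that $K$ is normal not merely in $H$ but in all of $G$. Indeed, $H$ is normal in $G$ (as $G/H$ is a group), so $K$ is a normal subgroup scheme of $H$; and by the Rosenlicht decomposition $G=G_{\ant}\cdot G_{\aff}$ with $G_{\ant}$ central, conjugation by $G$ coincides with conjugation by $G_{\aff}$, which lies in $H$ and therefore preserves $K$. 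With $K\triangleleft G$ in hand, I would compute, for $k\in K$ and $k':=g^{-1}kg\in K$, that $k\cdot[g,y]=[kg,y]=[gk',y]=[g,k'\cdot y]=[g,y]$, the last equality because $k'\in K$ fixes $Y$. Thus $K$ acts trivially on $X$, and faithfulness of the inclusion $G\subset\Aut(X)$ forces $K=e$.

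Granting this, I would treat the reverse inclusion as follows. For $u\in N$, conjugation by $u$ preserves $G$ and so defines $\gamma_u\in\Aut_{\gp}(G)$, $\gamma_u(g)=ugu^{-1}$. For $h\in H$ and $y\in Y$, using $u|_Y=\id$ and that $H$ preserves $Y$, one gets $\gamma_u(h)\cdot y=u\,h\,u^{-1}(y)=u(h\cdot y)=h\cdot y$; hence $\gamma_u(h)$ maps $Y$ into $Y$, so it stabilizes $o$ and lies in $H$, and it agrees with $h$ on $Y$. By the faithfulness just established, $\gamma_u(h)=h$, so $\gamma_u$ fixes $H$ pointwise and $\varphi(\gamma_u)$ is defined. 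Finally $u=\varphi(\gamma_u)$, since both agree on every point $g\cdot y$: indeed $u(g\cdot y)=\gamma_u(g)\cdot u(y)=\gamma_u(g)\cdot y=\varphi(\gamma_u)(g\cdot y)$, and such points exhaust $X$.

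For the last assertion, suppose $u=\varphi(\gamma)$ also lies in $G$. Then the conjugation identity reads $ugu^{-1}=\gamma(g)$, so $\gamma$ is the inner automorphism of $G$ determined by $u$. Since $G/H$ is commutative, the induced automorphism $\psi(\gamma)$ of $G/H$ is the identity; by the injectivity of $\psi$ (Lemma \ref{lem:psi}), $\gamma=\id_G$, and therefore $u=\varphi(\id_G)=\id_X$, i.e. $u=e$. This shows $\varphi(\Aut^H_{\gp}(G))\cap G=\{e\}$, completing the proof. The only non-formal ingredient is the faithfulness of $H$ on $Y$ in the second paragraph; everything else is bookkeeping with the associated-bundle structure of $X$.
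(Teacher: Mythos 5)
Your proof is correct, and its skeleton (injectivity, the two inclusions for the image, the trivial intersection with $G$) matches the paper's; the two arguments also ultimately rest on the same two facts, namely that $G_{\ant}$ is central in $G$ (Rosenlicht) and that $G$ acts faithfully on $X$. The one genuine difference is how the hard inclusion is organized. The paper takes $u$ normalizing $G$ and centralizing $Y$, observes that the commutator $uhu^{-1}h^{-1}$ is an element of $G$ centralizing $Y$, and kills it in one stroke because $X = G_{\ant}\cdot Y$ with $G_{\ant}$ central, so such an element centralizes all of $X$. You instead isolate an auxiliary claim --- $H$ acts faithfully on $Y$ --- proved by showing the kernel $K$ is normal in all of $G$ (again via $G=G_{\ant}\cdot G_{\aff}$ with $G_{\aff}\subset H$) and hence acts trivially on $X=G\times^H Y$; you then apply this to $\gamma_u(h)h^{-1}$, which is the same commutator, after first checking it lies in $H$. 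Your route is slightly longer but makes explicit a reusable fact that the paper leaves implicit, and your injectivity argument (directly from faithfulness of $G$ on $X$) is a touch more economical than the paper's, which passes through $\psi$ and Lemma \ref{lem:psi}. For the last assertion both proofs reduce to $\psi(\gamma)=\id_{G/H}$ and invoke Lemma \ref{lem:psi}; your phrasing via $\Int(\bar u)=\id$ on the commutative group $G/H$ is equivalent to the paper's ``translation fixing the origin''.
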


\begin{proof}
Let $\gamma \in \Aut^H_{\gp}(G)$ such that 
$\varphi(\gamma) = \id_X$. In view of the
equivariance of $f$, it follows that 
$\psi(\gamma) = \id_{G/H}$. Thus, 
$\gamma = \id_G$ by Lemma \ref{lem:psi}.
So $\varphi$ is injective; we will therefore
identify $\Aut^H_{\gp}(G)$ with the image of 
$\varphi$.

As already noticed, this image normalizes
$G$; it also centralizes $Y$ by construction.
Conversely, let $u \in \Aut(X)$ normalizing
$G$ and centralizing $Y$. Since $H$ normalizes $Y$,
the commutator $u h u^{-1} h^{-1}$ centralizes $Y$ 
for any schematic point $h \in H$. Also, 
$u h u^{-1} h^{-1} \in G$. But in view of (\ref{eqn:var}), 
we have $X = G_{\ant} \cdot Y$, where $G_{\ant}$ 
is central in $G$. It follows that $u h u^{-1} h^{-1}$ 
centralizes $X$. Hence $u$ centralizes $H$, 
and acts on $G$ by conjugation via some 
$\gamma \in \Aut^H_{\gp}(G)$. For any schematic points 
$g \in G$, $y \in Y$, we have
$u(g \cdot y) = u g u^{-1} u(y) 
= \gamma(g) u(y) = \gamma(g) y$,
that is, $u = \varphi(\gamma)$. 
  
It remains to show that $\Aut^H_{\gp}(G)$ intersects
$G$ trivially. Let $\gamma \in \Aut^H_{\gp}(G)$
such that $\varphi(\gamma) \in G$. Then
$\gamma$ acts on $G/H$ by a translation,
and fixes the origin. So $\psi(\gamma) = \id_{G/H}$,
and $\gamma = \id_G$ by using Lemma \ref{lem:psi}
again.
\end{proof}

Now Theorem \ref{thm:inf} follows by combining
Lemmas \ref{lem:psi} and \ref{lem:varphi}.

\begin{remark}\label{rem:inf}
(i) With the above notation, $\Aut^H_{\gp}(G)$
is the group of integer points of a linear algebraic
group defined over the field of rational numbers. 
Indeed, we may reduce to the case where $G$ is 
anti-affine, as in the beginning of the proof of 
Lemma \ref{lem:psi}. Then $\Aut^H_{\gp}(G)$ 
is the group of units of the ring 
$\End^H_{\gp}(G) =: R$; moreover, the additive 
group of $R$ is free of finite rank (as follows from 
\cite[Lem.~5.1.3]{BSU}). So the group of units
of the finite-dimensional $\bQ$-algebra
$R_{\bQ} := R \otimes_{\bZ} \bQ$ is a closed 
subgroup of $\GL(R_{\bQ})$ (via the regular
representation), and its group of integer
points relative to the lattice $R \subset R_{\bQ}$
is just $\Aut^H_{\gp}(G)$.

In other terms, $\Aut^H_{\gp}(G)$ is an 
\emph{arithmetic group}; it follows e.g.~that this group
is finitely presented (see \cite{Bor62}).

\smallskip \noindent
(ii) The commensurability class of $\Aut_{\gp}^{G_{\aff}}(G)$ 
is an isogeny invariant. Consider indeed an isogeny 
\[ u : G \longrightarrow G', \] 
i.e., $u$ is a faithfully flat homomorphism
and its kernel $F$ is finite. Then $u$ induces isogenies
$G_{\aff} \to G'_{\aff}$ and $G_{\ant} \to G'_{\ant}$. 
In view of Lemma \ref{lem:psi}, we may thus assume 
that $G$ and $G'$ are anti-affine. We may choose 
a positive integer $n$ such that $F$ is contained 
in the $n$-torsion subgroup scheme $G[n]$; also, recall 
from the proof of Lemma \ref{lem:psi} that $G[n]$ is 
finite. Thus, there exists an isogeny 
\[ v: G' \longrightarrow G\] 
such that $v \circ u = n_G$ (the multiplication map by $n$ 
in $G$). Then we have a natural homomorphism 
\[ u_* : \Aut_{\gp}^{G_{\aff} \cdot F}(G) \longrightarrow 
\Aut_{\gp}^{G'_{\aff}}(G') \]
which lies in a commutative diagram
\[ \xymatrix{
\Aut_{\gp}^{G_{\aff} \cdot G[n]}(G) 
\ar[r]^-{u_*} \ar[d] & 
\Aut_{\gp}^{G'_{\aff} \cdot \Ker(v)}(G') 
\ar[r]^-{v_*} \ar[d] & \Aut_{\gp}^{G_{\aff}}(G) 
\\
\Aut_{\gp}^{G_{\aff} \cdot F}(G) 
 \ar[r]^-{u_*} &  \Aut_{\gp}^{G'_{\aff}}(G'),  \\
} \]
where all arrows are injective, and the images of the vertical 
arrows have finite index (see Lemma \ref{lem:psi} again). 
Moreover, the image of the homomorphism
$v_* \circ u_* = (v \circ u)_* = (n_G)_*$ 
has finite index as well, since this homomorphism is identified
with the inclusion of $\Aut_{\gp}^{G_{\aff} \cdot G[n]}(G)$
in $\Aut_{\gp}^{G_{\aff}}(G)$. This yields our assertion.

\smallskip \noindent
(iii) There are many examples of smooth connected algebraic
groups $G$ such that $\Aut_{\gp}^{G_{\aff}}(G)$ is infinite.
The easiest ones are of the form $A \times G_{\aff}$,
where $A$ is an abelian variety such that $\Aut_{\gp}(A)$
is infinite. To construct further examples, 
let $G$ be any smooth connnected algebraic group,
$\alpha : G \to A$ the quotient homomorphism
by $G_{\aff}$, and $h : A \to B$ a non-zero 
homomorphism to an abelian variety. Then 
$G' := G \times B$ is a smooth connected algebraic group 
as well, and the assignment
$(g,b) \mapsto (g, b + h(\alpha(g)))$ defines
an automorphism of $G'$ of infinite order, which
fixes pointwise $G'_{\aff} = G_{\aff}$. 
\end{remark}

\section{Proof of Theorem \ref{thm:pos}}
\label{sec:pos}

By \cite[Thm.~3]{Br10}, the Albanese morphism of $X$ 
is of the form
\[ f : X \longrightarrow \Alb(X) = G/H, \]
where $H$ is an affine subgroup scheme of $G$ containing 
$G_{\aff}$. Thus, we have as in Section \ref{sec:inf} 
\[ X \simeq G \times^H Y \simeq G_{\ant} \times^K Y, \] 
where $K := G_{\ant} \cap H$ and $Y$ denotes the 
(scheme-theoretic) fiber of $f$ at the origin of $G/H$. 
Then $Y$ is a closed subscheme of $X$, normalized by $H$.

\begin{lemma}\label{lem:alb}

\begin{enumerate}

\item[{\rm (i)}] $G_{\ant}$ is the largest anti-affine subgroup 
of $\Aut(X)$. In particular, $G_{\ant}$ is normal in $\Aut(X)$.

\item[{\rm (ii)}] $f_*(\cO_X) = \cO_{G/H}$.

\item[{\rm (iii)}] If $K$ is smooth, then $Y$ is a normal 
projective variety, almost homogeneous under 
the reduced neutral component $H^0_{\red}$.

\end{enumerate}

\end{lemma}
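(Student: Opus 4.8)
The plan is to treat the three assertions in the order (ii), (i), (iii), since (ii) feeds into the other two. For (ii) I would form the Stein factorization $X \xrightarrow{g} \tilde{X} \xrightarrow{h} G/H$ of $f$, where $\tilde X := \Spec_{G/H}(f_*\cO_X)$, so that $g_*(\cO_X) = \cO_{\tilde X}$ and $h$ is finite. Here $\tilde X$ is normal (being the relative spectrum of the integrally closed $\cO_{G/H}$-algebra $f_*\cO_X$) and connected (since $H^0(\tilde X,\cO_{\tilde X}) = H^0(X,\cO_X) = k$). The whole diagram is $G_{\ant}$-equivariant, and as $G_{\ant}$ acts transitively on $G/H = G_{\ant}/K$, the finite scheme $\tilde X$ is of the form $G_{\ant}\times^{K}Y'$ with $Y' = \Spec H^0(Y,\cO_Y)$; normality forces $Y'$ to be reduced, hence a single $K$-orbit, so $\tilde X = G_{\ant}/K'$ with $K'$ of finite index in $K$. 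Then $K'$ contains $K^{0} = (G_{\ant})_{\aff}$, so $\tilde X$ is a quotient of the abelian variety $G_{\ant}/(G_{\ant})_{\aff}$ by a finite group, hence is itself an abelian variety. Since $f$ is the Albanese morphism, its universal property applied to $g : X \to \tilde X$ yields a morphism $b : G/H \to \tilde X$ with $g = b\circ f$; comparing with $h\circ g = f$ and using that $f$ is initial and $g$ is dominant gives $h\circ b = \id$ and $b\circ h = \id$. Thus $h$ is an isomorphism, i.e.\ $f_*(\cO_X) = \cO_{G/H}$.

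Granting (ii), assertion (iii) is mostly a descent argument. The fiber $Y$ is projective, being closed in $X$, and it is connected because $f_*(\cO_X) = \cO_{G/H}$ forces the fibers of $f$ to be connected. For normality I would use that the quotient map $q : G_{\ant}\times Y \to G_{\ant}\times^{K} Y = X$ is a $K$-torsor, hence faithfully flat; \emph{when $K$ is smooth}, $q$ is even smooth, so $G_{\ant}\times Y$ is normal (being smooth over the normal variety $X$), and then $Y$ is normal by descent along the smooth projection $G_{\ant}\times Y \to Y$. This is exactly where smoothness of $K$ enters: without it the scheme-theoretic fiber $Y$ may fail to be reduced. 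Being normal and connected, $Y$ is a variety. Finally, the open orbit $X_{0} = G\cdot x \simeq G/G_{x}$ meets $Y = f^{-1}(o)$ in the open subset $Y_{0} = X_{0}\cap Y \simeq H/G_{x}$, which is nonempty (as $f(X_{0}) = G/H$) and hence dense in the irreducible $Y$; the identity component $H^{0}$ acts on $Y_{0}$ with an open orbit, and passing to $H^{0}_{\red}$, which has the same orbits on the reduced variety $Y$, shows that $Y$ is almost homogeneous under $H^{0}_{\red}$.

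For (i), set $\Gamma := \Aut^{0}(X)$ and let $B := \Gamma_{\ant}$ be its largest anti-affine subgroup. Since anti-affine groups are connected, $B$ is the largest anti-affine subgroup of $\Aut(X)$; being characteristic in $\Gamma$, which is normal in $\Aut(X)$, it is normal in $\Aut(X)$, and this yields the ``in particular'' clause once $B = G_{\ant}$ is established. The inclusion $G_{\ant}\subseteq B$ is clear. For the reverse inclusion I would invoke (ii): by Blanchard's lemma the $\Gamma$-action descends along $f$ to a homomorphism $\rho : \Gamma \to \Aut^{0}(G/H) = G/H$, which restricts to the quotient map $G_{\ant}\to G_{\ant}/K = G/H$ on $G_{\ant}$. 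Hence $\rho|_{B}$ is surjective, $B = G_{\ant}\cdot N$ with $N := \ker(\rho|_{B}) \supseteq K$, and the quotient $Q := B/G_{\ant}\simeq N/K$ is again anti-affine.

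\textbf{The main obstacle} is to prove that $Q$ is trivial, equivalently $N = K$. The element group $N$ is central in $\Gamma$, preserves every fiber of $f$ (so it preserves $Y$ and acts faithfully there), and centralizes $H \subseteq \Gamma$; thus $N$ acts on $Y_{0}\simeq H/G_{x}$ by automorphisms commuting with the $H$-action, i.e.\ by central right translations, exactly as does $K = G_{\ant}\cap H$. My plan is to combine this with the anti-affineness of $Q$ and with the triviality of the Albanese contribution of the fiber $Y$ forced by (ii), so as to rule out any ``vertical'' anti-affine automorphism beyond $K$, giving $N = K$ and $B = G_{\ant}$. Establishing the absence of such extra vertical anti-affine automorphisms is the delicate point, and is where I expect to rely on the structure theory of anti-affine algebraic groups.
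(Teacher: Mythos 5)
Your treatments of (ii) and (iii) are sound. For (ii) you unpack the Stein factorization via the associated-bundle structure $G_{\ant}\times^K Y'$ with $Y'=\Spec H^0(Y,\cO_Y)$, which is a legitimate variant of the paper's argument (the paper instead descends the $G$-action to the Stein factor by Blanchard's lemma and identifies it with an abelian variety through the stabilizer of a point of its open orbit); both end with the same appeal to the universal property of the Albanese. For (iii) your normality argument by faithfully flat descent along the smooth $K$-torsor $G_{\ant}\times Y\to X$ and the smooth projection to $Y$ is cleaner than the paper's two-step argument (reducedness via $Y_{\red}$, then normality via the normalization), and your identification of the dense $H^0_{\red}$-orbit inside $X_0\cap Y\simeq H/G_x$ is a valid substitute for the paper's cartesian square over the isogeny $G/H^0_{\red}\to G/H$.

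Part (i), however, has a genuine gap, and you have named it yourself: you reduce the statement to showing that $N=\ker(\rho\vert_B)$ equals $K$, equivalently that the anti-affine quotient $Q=N/K$ is trivial, and you only sketch a plan for this. The plan is also problematic as stated: to rule out ``vertical'' anti-affine automorphisms you would need to know that the group acting on the fibre $Y$ is affine, which amounts to knowing that $Y$ is a normal projective variety with trivial Albanese --- i.e.\ precisely part (iii), which you only establish when $K$ is smooth (and which fails to even make sense as stated when $Y$ is non-reduced, as can happen in positive characteristic). The paper avoids the fibration entirely and argues directly on the open orbit: by \cite[Prop.~5.5.3]{BSU} the largest anti-affine subgroup $\Aut(X)_{\ant}$ centralizes $G$, so $G':=G\cdot\Aut(X)_{\ant}$ is a smooth connected group acting on $X$ with the same open dense orbit; the stabilizer $H'$ in $G'$ of a point of that orbit is \emph{affine} (\cite[Prop.~2.3.2]{BSU}), whence $G'/G\simeq H'/H$ is affine, while $G'/G$ is anti-affine as a quotient of $\Aut(X)_{\ant}$; therefore $G'=G$ and $\Aut(X)_{\ant}=G_{\ant}$. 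Note also that this argument does not use (ii), so the paper's logical order (i) before (ii) is unproblematic, whereas your route makes (i) depend on (ii) and, implicitly, on (iii).
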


\begin{proof}
(i) By \cite[Prop.~5.5.3]{BSU}, $\Aut(X)$ has a largest anti-affine 
subgroup $\Aut(X)_{\ant}$, and this subgroup centralizes $G$. 
Thus, $G' := G \cdot \Aut(X)_{\ant}$ is 
a smooth connected subgroup scheme of $\Aut^0(X)$
containing $G$ as a normal subgroup scheme. As a consequence,
$G'$ normalizes the open $G$-orbit in $X$. So $G/H = G'/H'$ 
for some subgroup scheme $H'$ of $G'$; equivalently,
$G' = G \cdot H'$. Also, $H'$ is affine in view of 
\cite[Prop.~2.3.2]{BSU}. Thus, the quotient group
$G'/G \simeq H'/H$ is affine.
But $G'/G = \Aut_{\ant}(X)/(\Aut_{\ant}(X)\cap G)$ is anti-affine.
Hence $G' = G$, and $\Aut_{\ant}(X) = G_{\ant}$.

(ii) Consider the Stein factorization $f = g \circ h$,
where $g : X' \to G/H$ is finite, and $h : X \to X'$ 
satisfies $h_*(\cO_X) = \cO_{X'}$. By Blanchard's lemma 
(see \cite[Prop.~4.2.1]{BSU}), 
the $G$-action on $X$ descends to a unique action on $X'$ 
such that $g$ is equivariant. As a consequence, 
$X'$ is a normal projective variety, almost homogeneous 
under this action, and $h$ is equivariant as well. 
Let $H' \subset G$ denote the scheme-theoretic stabilizer 
of a $k$-rational point of the open $G$-orbit in $X'$. 
Then $H' \subset H$ and the homogeneous space 
$H/H'$ is finite. It follows that $H' \supset G_{\aff}$, i.e., 
$G/H'$ is an abelian variety. Thus, so is $X'$; then 
$X' = X$ and $h = \id$ by the universal property of 
the Albanese variety.

(iii) Since $K$ is smooth, it normalizes the reduced 
subscheme $Y_{\red} \subset Y$. Thus, 
$G_{\ant} \times^K Y_{\red}$
may be viewed as a closed subscheme of 
$G_{\ant} \times^K Y = X$,
with the same $k$-rational points (since
$Y_{\red}(k) = Y(k)$). It follows that
$X = G_{\ant} \times^K Y_{\red}$,
i.e., $Y$ is reduced. 

Next, let $\eta : \tilde{Y} \to Y$
denote the normalization map. Then the action of
$K$ lifts uniquely to an action on
$\tilde{Y}$. Moreover, the resulting morphism
$G_{\ant} \times^K \tilde{Y} \to X$
is finite and birational, hence an isomorphism.
Thus, $\eta$ is an isomorphism as well,
i.e., $Y$ is normal. Since $Y$ is connected
and closed in $X$, it is a projective variety.

It remains to show that $Y$ is almost homogenous
under $H^0_{\red} =: H'$ (a smooth connected
linear algebraic group). Since the homogeneous space 
$H/H'$ is finite, so is the natural map 
$\varphi : G/H' \to G/H$; 
as a consequence, $G/H'$ is an abelian variety
and $\varphi$ is an isogeny. We have a cartesian square
\[ \xymatrix{
X' := G \times^{H'} Y \ar[r] \ar[d] & G/H' \ar[d]^{\varphi} \\ 
X = G \times^H Y \ar[r] &  G/H.  \\
} \]
Thus, $X'$ is a normal projective variety, 
almost homogeneous under $G$. Its open
$G$-orbit intersects $Y$ along an open subvariety,
which is the unique orbit of $H'$.
\end{proof}

We denote by $\Aut(X,Y)$ the normalizer of $Y$ in 
$\Aut(X)$; then $\Aut(X,Y)$ is the stabilizer of 
the origin for the action of $\Aut(X)$ on 
$\Alb(X) = G/H$. 
Likewise, we denote by $\Aut_{\gp}(G_{\ant}, K)$
(resp.~$\Aut(Y,K)$) the normalizer of $K$ in 
$\Aut_{\gp}(G_{\ant})$ (resp.~$\Aut(Y)$).

\begin{lemma}\label{lem:XY}

\begin{enumerate}

\item[{\rm (i)}] There is an exact sequence
\[  \pi_0(K) \longrightarrow
\pi_0 \, \Aut(X) \longrightarrow 
\pi_0 \, \Aut(X,Y) \longrightarrow 0. \]

\item[{\rm (ii)}] We have a closed immersion
\[ \iota: \Aut(X,Y) \longrightarrow 
\Aut_{\gp}(G_{\ant}, K) \times \Aut(Y,K) \]
with image consisting of the pairs $(\gamma,v)$ 
such that $\gamma\vert_K = \Int(v)\vert_K$.

\end{enumerate}

\end{lemma}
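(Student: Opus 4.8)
The plan is to read both parts off the homogeneous fibration $f : X \to \Alb(X) = G/H = G_{\ant}/K =: B$, using that $\Aut(X,Y)$ is the stabilizer of the origin $o \in B$ for the action of $\Aut(X)$ on $B$. For part~(i), I would first record the two structural identities that drive the computation. Since $G_{\ant}$ is connected and normal in $\Aut(X)$ (Lemma~\ref{lem:alb}(i)) and acts on $B = G_{\ant}/K$ transitively by translations, the orbit--stabilizer principle at $o$ gives
\[ \Aut(X) = G_{\ant} \cdot \Aut(X,Y), \qquad G_{\ant} \cap \Aut(X,Y) = K, \]
the intersection taken scheme-theoretically, being the stabilizer of $o$ inside $G_{\ant}$. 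These exhibit $\Aut(X)$ as an almost semidirect product of $G_{\ant}$ and $\Aut(X,Y)$ amalgamated along $K$. Comparing dimensions through orbit--stabilizer and using irreducibility of the connected group $\Aut(X)^0$ gives moreover $\Aut(X)^0 = G_{\ant} \cdot \Aut(X,Y)^0$. Passing to component groups in this structure---$\pi_0$ being insensitive to the connected normal subgroup $G_{\ant}$---then yields the exact sequence of part~(i),
\[ \pi_0(K) \longrightarrow \pi_0\,\Aut(X) \longrightarrow \pi_0\,\Aut(X,Y) \longrightarrow 0. \]

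The delicate point here is that $K = G_{\ant} \cap H$ need not be smooth in positive characteristic, so $\pi_0(K)$ and the intersection $G_{\ant} \cap \Aut(X,Y)$ must be formed scheme-theoretically and the quotients in the fppf sense. I would organize the bookkeeping around the isomorphism $\Aut(X,Y)/K \cong \Aut(X)/G_{\ant}$ furnished by the second isomorphism theorem for group schemes, and verify the exactness of the displayed sequence through it. This $\pi_0$-exactness with a possibly non-reduced $K$ is the step I expect to be the main obstacle.

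For part~(ii), I would define the map directly: for $u \in \Aut(X,Y)$ put
\[ \iota(u) = \bigl( \Int(u)\vert_{G_{\ant}},\ u\vert_Y \bigr). \]
The first coordinate makes sense because $G_{\ant}$ is normal, and it preserves $K$ because $u$ fixes $o$, so that the automorphism it induces on $B = G_{\ant}/K$ forces $\gamma(K) = K$ for $\gamma := \Int(u)\vert_{G_{\ant}}$; the second makes sense because $u$ normalizes $Y$. Writing $v := u\vert_Y$, the identity $v k v^{-1} = (u k u^{-1})\vert_Y = \gamma(k)\vert_Y$ for schematic points $k \in K$ shows at once that $v$ normalizes $K$ in $\Aut(Y)$ and that the compatibility $\gamma\vert_K = \Int(v)\vert_K$ holds, so $\iota$ lands in the asserted subgroup. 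Injectivity---indeed the monomorphism property on the functor of points---comes from the presentation $X = G_{\ant} \times^K Y$: an $R$-point is represented by a pair $[g,y]$ and $u([g,y]) = [\gamma(g), v(y)]$, so $u$ is determined by $(\gamma,v)$. For surjectivity onto the subgroup of compatible pairs, given $(\gamma,v)$ with $\gamma\vert_K = \Int(v)\vert_K$ I would form $\gamma \times v$ on $G_{\ant} \times Y$ and check, using exactly this compatibility, that it carries the twisted orbit $k\cdot(g,y) = (gk^{-1}, ky)$ to $\gamma(k)\cdot(\gamma\times v)(g,y)$; since $G_{\ant} \times Y \to X$ is a $K$-torsor, hence a categorical quotient, $\gamma \times v$ descends to the required $u \in \Aut(X,Y)$ with $\iota(u) = (\gamma,v)$.

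Finally, to promote $\iota$ from a monomorphism to a closed immersion I would invoke that a monomorphism of group schemes locally of finite type over a field is an immersion onto a subgroup, so it remains only to see the image is closed; but the compatibility $\gamma\vert_K = \Int(v)\vert_K$ is the equalizer of the two morphisms $\Aut_{\gp}(G_{\ant},K) \times \Aut(Y,K) \rightrightarrows \Aut_{\gp}(K)$ given by $\gamma \mapsto \gamma\vert_K$ and $v \mapsto \Int(v)\vert_K$, hence a closed condition. The main obstacle in part~(ii) is the scheme-theoretic descent in the surjectivity step: one must check that $\gamma \times v$ descends through the $K$-torsor as an automorphism of the scheme $X$, not merely on $k$-points, which is exactly where the categorical-quotient property and the compatibility condition are both used.
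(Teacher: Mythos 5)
Your proposal is correct and follows essentially the same route as the paper's own proof: for (i) the identical orbit--stabilizer identities $\Aut(X) = G_{\ant} \cdot \Aut(X,Y)$, $G_{\ant} \cap \Aut(X,Y) = K$ and $\Aut^0(X) = G_{\ant} \cdot \Aut(X,Y)^0$ followed by the $\pi_0$ computation, and for (ii) the same assignment $u \mapsto (\Int(u)\vert_{G_{\ant}}, u\vert_Y)$ with the converse obtained by descending $\gamma \times v$ through the $K$-torsor $G_{\ant} \times Y \to X$. The step you flag as the main obstacle in (i) is handled in the paper by the modular-law identity $(G_{\ant} \cdot \Aut(X,Y)^0) \cap \Aut(X,Y) = K \cdot \Aut(X,Y)^0$, giving $\pi_0\,\Aut(X) = \Aut(X,Y)/(K \cdot \Aut(X,Y)^0)$, which is exactly the bookkeeping you describe.
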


\begin{proof}
(i) Since the normal subgroup scheme $G_{\ant}$ 
of $\Aut(X)$ acts transitively on 
$G/H = \Aut(X)/\Aut(X,Y)$, we have 
$\Aut(X) = G_{\ant} \cdot \Aut(X,Y)$. Moreover,
$G \cap \Aut(X,Y) = H$, hence 
$G_{\ant} \cap \Aut(X,Y) = K$. Thus, we obtain
$\Aut^0(X) = G_{\ant} \cdot \Aut(X,Y)^0$ and
\[ \pi_0 \, \Aut(X) = \Aut(X)/\Aut^0(X) 
= \Aut(X,Y)/(G_{\ant} \cdot \Aut(X,Y)^0 \cap \Aut(X,Y)) \]
\[ = \Aut(X,Y)/(G_{\ant}  \cap \Aut(X,Y)) \cdot \Aut(X,Y)^0
= \Aut(X,Y)/K \cdot \Aut(X,Y)^0. \]
This yields readily the desired exact sequence.

(ii) Let $u \in \Aut(X,Y)$, and $v$ its restriction to $Y$. 
Since $u$ normalizes $G_{\ant}$, we have 
$u(g \cdot y) = \Int(u)(g) \cdot u(y) = \Int(u)(g) \cdot v(y)$
for all schematic points $g \in G_{\ant}$ and $y \in Y$.
Moreover, $u$ normalizes $K$, hence
$\Int(u) \in \Aut_{\gp}(G_{\ant},K)$ and $\Int(v) \in \Aut(Y,K)$.
Since $g \cdot y = g h^{-1} \cdot h \cdot y$ for any
schematic point $h \in H$, we obtain 
$v(h \cdot y) = \Int(u)(h) \cdot v(y)$, i.e., 
$\Int(u) = \Int(v)$ on $K$.
Thus, $u$ is uniquely determined by the pair
$(\Int(u),v)$, and this pair satisfies the assertion. 
Conversely, any pair $(\gamma,v)$ satisfying the assertion
yields an automorphism $u$ of $X$ normalizing $Y$, via 
$u(g \cdot y) := \gamma(g) \cdot v(y)$.
\end{proof}

In view of the above lemma, we identify $\Aut(X,Y)$
with its image in $\Aut_{\gp}(G_{\ant},K) \times \Aut(Y,K)$
via $\iota$. Denote by 
$\rho : \Aut(X,Y) \to \Aut_{\gp}(G_{\ant},K)$
the resulting projection.

\begin{lemma}\label{lem:rho}
The above map $\rho$ induces an exact sequence
\[ \pi_0 \, \Aut^K(Y) \longrightarrow \pi_0 \, \Aut(X,Y)
\longrightarrow I \longrightarrow 1, \]
where $I$ denotes the subgroup of $\Aut_{\gp}(G_{\ant},K)$
consisting of those $\gamma$ such that 
$\gamma \vert_K = \Int(v)\vert_K$ for some $v \in \Aut(Y,K)$.
\end{lemma}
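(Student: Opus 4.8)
The plan is to promote the description of $\Aut(X,Y)$ furnished by Lemma \ref{lem:XY}(ii) to a short exact sequence of locally algebraic groups, and then to apply the right-exactness of the functor $\pi_0$. Recall that $\iota$ identifies $\Aut(X,Y)$ with the group of pairs $(\gamma,v) \in \Aut_{\gp}(G_{\ant},K) \times \Aut(Y,K)$ satisfying $\gamma\vert_K = \Int(v)\vert_K$, and that $\rho$ is the projection onto the first factor. First I would read off the image and the kernel of $\rho$ directly from this description. Its image is, by the very definition of $I$, the subgroup $I$. Its kernel consists of the pairs with $\gamma = \id_{G_{\ant}}$, for which the defining relation becomes $\Int(v)\vert_K = \id_K$; the second projection then identifies $\ker(\rho)$ with the centralizer $\Aut^K(Y)$ of $K$ in $\Aut(Y)$. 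This produces an exact sequence of locally algebraic groups
\[ 1 \longrightarrow \Aut^K(Y) \longrightarrow \Aut(X,Y)
\stackrel{\rho}{\longrightarrow} I \longrightarrow 1. \]

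The next step is to check that $I$ is discrete, so that $\pi_0 \, I = I$. Here the anti-affineness of $G_{\ant}$ is essential, exactly as in Remark \ref{rem:inf}(i): since $G_{\ant}$ is commutative and $\End_{\gp}(G_{\ant})$ is a free abelian group of finite rank by \cite[Lem.~5.1.3]{BSU}, the automorphism group $\Aut_{\gp}(G_{\ant})$ sits inside a finitely generated group and is therefore discrete; hence so are its subgroups $\Aut_{\gp}(G_{\ant},K)$ and $I$.

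Finally I would pass to component groups. As $I$ is discrete, $\rho$ sends the identity component $\Aut(X,Y)^0$ into $I^0 = \{1\}$, so it descends to a surjection $\pi_0 \, \Aut(X,Y) \to I$. A class in $\pi_0 \, \Aut(X,Y)$ lies in the kernel of this surjection precisely when some representative lies in $\ker(\rho) = \Aut^K(Y)$, that is, precisely when it belongs to the image of $\pi_0 \, \Aut^K(Y)$ under the map induced by the inclusion $\Aut^K(Y) = \ker(\rho) \hookrightarrow \Aut(X,Y)$. This yields the asserted exactness of
\[ \pi_0 \, \Aut^K(Y) \longrightarrow \pi_0 \, \Aut(X,Y)
\longrightarrow I \longrightarrow 1. \]
Everything here is formal once Lemma \ref{lem:XY} is in hand; the only genuinely substantive input, and the step I expect to require the most care, is the discreteness of $\Aut_{\gp}(G_{\ant},K)$, which forces $\pi_0 \, I = I$ and underlies the whole identification.
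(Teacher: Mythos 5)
Your proposal is correct and follows essentially the same route as the paper: extract the short exact sequence $1 \to \Aut^K(Y) \to \Aut(X,Y) \to I \to 1$ from Lemma \ref{lem:XY}(ii), kill the identity component under $\rho$, and pass to $\pi_0$. The paper phrases the key middle step as ``$\Aut(X,Y)^0$ centralizes $G_{\ant}$ by \cite[Lem.~5.1.3]{BSU}'' rather than as discreteness of the target, but this is the same rigidity input for anti-affine groups, so the two arguments coincide.
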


\begin{proof}
By Lemma \ref{lem:XY} (ii), we have an exact sequence
\[ 1 \longrightarrow \Aut^K(Y) \longrightarrow \Aut(X,Y)
\stackrel{\rho}{\longrightarrow} I \longrightarrow 1. \]
Moreover, the connected algebraic group $\Aut(X,Y)^0$ 
centralizes $G_{\ant}$ in view of \cite[Lem.~5.1.3]{BSU}; 
equivalently, $\Aut(X,Y)^0 \subset \Ker(\rho)$. This readily 
yields the assertion.
\end{proof}

We now consider the case where $K$ is \emph{smooth}; 
this holds e.g.~if $\charac(k) = 0$. Then $\Aut(Y)$ is a linear 
algebraic group by Theorem \ref{thm:ahv} and Lemma 
\ref{lem:alb} (iii). 
Thus, so is the subgroup scheme $\Aut^K(Y)$, and hence 
$\pi_0 \, \Aut^K(Y)$ is finite. Together with Lemmas
\ref{lem:XY} and \ref{lem:rho}, it follows that $\pi_0 \, \Aut(X)$
is commensurable with $I$.

To analyze the latter group, we consider the homomorphism
\[ \eta : I \longrightarrow \Aut_{\gp}(K), \quad 
\gamma \longmapsto \gamma\vert_K \] 
with kernel $\Aut_{\gp}^K(G_{\ant})$.
Since $K$ is a commutative linear algebraic group, it has
a unique decomposition
\[ K \simeq D \times U, \] 
where $D$ is diagonalizable and $U$ is unipotent. Thus,
we have
\[ \Aut_{\gp}(G_{\ant},K) = 
\Aut_{\gp}(G_{\ant},D) \cap \Aut_{\gp}(G_{\ant},U), \]
\[\Aut_{\gp}(Y,K) = 
\Aut_{\gp}(Y,D) \cap \Aut_{\gp}(Y,U), \]
\[ \Aut_{\gp}(K) \simeq \Aut_{\gp}(D) \times \Aut_{\gp}(U). \]
Under the latter identification, the image of $\eta$ is contained 
in the product of the images of the natural homomorphisms
\[ \eta_D : \Aut(Y,D) \longrightarrow \Aut_{\gp}(D), \quad
\eta_U : \Aut(Y,U) \longrightarrow \Aut_{\gp}(U). \]
The kernel of $\eta_D$ (resp.~$\eta_U$) equals 
$\Aut^D(Y)$ (resp.~$\Aut^U(Y))$; also, the quotient
$\Aut(Y,D)/\Aut^D(Y)$ is finite in view of the rigidity of 
diagonalizable group schemes (see \cite[II.5.5.10]{DG}). 
Thus, the image of $\eta_D$ is finite as well.

As a consequence, $I$ is a subgroup of finite index of
\begin{equation}\label{eqn:J}
J := \{ \gamma \in \Aut_{\gp}(G_{\ant},K) ~\vert~
\gamma \vert_U = \Int(v)\vert_U \text{ for some }
v \in \Aut(Y,K) \}.
\end{equation}
Also, $\pi_0 \, \Aut(X)$ is commensurable with $J$.

If $\charac(k) > 0$, then $G_{\ant}$ is a semi-abelian 
variety (see \cite[Prop.~5.4.1]{BSU}) and hence $U$ is finite. 
Also, $U$ is smooth since so is $K$. Thus, 
$\Aut_{\gp}(U)$ is finite, and hence the image of $\eta$
is finite as well. Therefore, $\pi_0 \, \Aut(X)$ is commensurable 
with $\Aut_{\gp}^K(G)$, and hence with 
$\Aut_{\gp}^{G_{\aff}}(G)$ by Lemma \ref{lem:psi}. 
This completes the proof of Theorem \ref{thm:pos}  
in the case where $\charac(k) > 0$ and $K$ is smooth.

Next, we handle the case where $\charac(k) > 0$ and $K$ is
arbitrary. Consider the $n$th Frobenius kernel 
$I_n := (G_{\ant})_{p^n} \subset G_{\ant}$, where $n$ is 
a positive integer. Then $I_n \cap K$ is the $n$th Frobenius 
kernel of $K$; thus, the image of $K$ in $G_{\ant}/I_n$ 
is smooth for $n \gg 0$ (see \cite[III.3.6.10]{DG}).
Also, $\Aut(X)$ normalizes $I_n$ (since it normalizes
$G_{\ant}$), and hence acts on the quotient
$X/I_n$. The latter is a normal projective variety,
almost homogeneous under $G/I_n$ (as follows
from the results in \cite[\S 2.4]{Br17}). Moreover, 
$(G/I_n)_{\ant} = G_{\ant}/I_n$ and we have 
\[ \Alb(X/I_n) \simeq \Alb(X)/I_n 
\simeq G_{\ant}/I_n  K \simeq 
(G_{\ant}/I_n)/ (K/ I_n \cap K), \]
where $K/I_n \cap K$ is smooth for $n \gg 0$.

We now claim that the homomorphism 
$\Aut(X) \to \Aut(X/I_n)$ is bijective on $k$-rational
points. Indeed, every $v \in \Aut(X/I_n)(k)$
extends to a unique automorphism of the function
field $k(X)$, since this field is a purely inseparable
extension of $k(X/I_n)$. As $X$ is the normalization of 
$X/I_n$ in $k(X)$, this implies the claim. 

It follows from this claim that the induced map 
$\pi_0 \, \Aut(X) \to \pi_0 \, \Aut(X/I_n)$ is an isomorphism.
Likewise, every algebraic group automorphism of 
$G_{\ant}$ induces an automorphism of 
$G_{\ant}/I_n$ and the resulting map
$\Aut_{\gp}(G_{\ant}) \to \Aut_{\gp}(G_{\ant}/I_n)$
is an isomorphism, which restricts to an isomorphism
\[ \Aut_{\gp}^K(G_{\ant}) \stackrel{\simeq}{\longrightarrow}
\Aut_{\gp}^{K/I_n \cap K}(G_{\ant}/I_n). \]
All of this yields a reduction to the case where $K$ is smooth, 
and hence completes the proof of Theorem \ref{thm:pos} 
when $\charac(k) > 0$.

It remains to treat the case where $\charac(k) = 0$. 
Consider the extension of algebraic groups
\[ 0 \longrightarrow D \times U \longrightarrow G_{\ant}
\longrightarrow A \longrightarrow 0, \]
where $A := G_{\ant}/K = G/H = \Alb(X)$ is an
abelian variety. By \cite[\S 5.5]{BSU}, the above extension 
is classified by a pair of injective homomorphisms
\[ X^*(D) \longrightarrow \wA(k), \quad 
U^{\vee} \longrightarrow H^1(A,\cO_A) = \Lie(\wA), \]
where $X^*(D)$ denotes the character group of $D$,
and $\wA$ stands for the dual abelian variety of $A$. 
The images of these homomorphisms yield a finitely 
generated subgroup $\Lambda \subset \wA(k)$ and a subspace
$V \subset \Lie(\wA)$. Moreover, we may identify
$\Aut_{\gp}(G_{\ant},K)$ with the subgroup of $\Aut_{\gp}(\wA)$ 
which stabilizes $\Lambda$ and $V$.
This identifies the group $J$ defined in (\ref{eqn:J}),
with the subgroup of $\Aut_{\gp}(\wA,\Lambda,V)$ 
consisting of those $\gamma$ such that 
$\gamma \vert_V \in \Aut(Y,K) \vert_V$,
where $\Aut(Y,K)$ acts on $V$ via the dual of its
representation in $U$. Note that $\Aut(Y,K) \vert_V$
is an algebraic subgroup of $\GL(V)$. Therefore, the proof of 
Theorem \ref{thm:pos} will be completed by the following
result due to Ga\"el R\'emond:

\begin{lemma}\label{lem:gael}
Assume that $\charac(k) = 0$. Let $A$ be an abelian variety. 
Let $\Lambda$ be a finitely generated subgroup 
of $A(k)$. Let $V$ be a vector subspace of $\Lie(A)$.
Let $G$ be an algebraic subgroup of $\GL(V)$. Let 
\[ \Gamma := \{ \gamma \in \Aut_{\gp}(A,\Lambda,V) 
~\vert~ \gamma\vert_V \in G \}. \]
Then $\Gamma$ is an arithmetic group.
\end{lemma}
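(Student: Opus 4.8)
The plan is to exhibit $\Gamma$ as an arithmetic subgroup of a suitable $\bQ$-algebraic group, by combining the classical arithmeticity of $\Aut_{\gp}(A)$ with a descent argument that turns the three constraints (stabilizing $\Lambda$, stabilizing $V$, and landing in $G$) into conditions internal to that $\bQ$-group. Set $R := \End_{\gp}(A) \otimes_{\bZ} \bQ$; since $\charac(k) = 0$, this is a finite-dimensional semisimple $\bQ$-algebra, and its unit group $\mathbf{G} := R^{\times}$ is a reductive $\bQ$-algebraic group in which $\Aut_{\gp}(A) = \End_{\gp}(A)^{\times}$ sits as an arithmetic subgroup (the units of the order $\End_{\gp}(A)$). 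The crucial starting remark is that $\Gamma \subseteq \Aut_{\gp}(A) \subseteq \mathbf{G}(\bQ)$ consists of $\bQ$-rational points. I will analyze all three constraints inside $\mathbf{G}$.

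The conditions on $V$ and on $\Lambda$ are comparatively clean. For $V$: the algebra $R$ acts $k$-linearly on $\Lie(A)$, and the map $R \to \Hom(V, \Lie(A)/V)$ sending $x$ to the composite of its action with the projection is $\bQ$-linear into a $k$-vector space, so its kernel $R_V := \{x \in R : x V \subseteq V\}$ is a $\bQ$-subalgebra of $R$; hence the stabilizer of $V$ in $\mathbf{G}$ is the $\bQ$-subgroup $R_V^{\times}$. For $\Lambda$: put $M := R \cdot \Lambda \subseteq A(k) \otimes_{\bZ} \bQ$, a finite-dimensional $\bQ$-vector space stable under $\mathbf{G}$, on which $\mathbf{G}$ acts through a $\bQ$-rational representation $\mathbf{G} \to \GL(M)$. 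Stabilizing the $\bQ$-subspace $\bQ\Lambda \subseteq M$ is then a $\bQ$-subgroup condition, while inside $\GL(\bQ\Lambda)$ the stabilizer of the full lattice $\Lambda$ is arithmetic. Intersecting $\Aut_{\gp}(A)$ with these conditions shows that $\Delta := \Aut_{\gp}(A, \Lambda, V)$ is an arithmetic subgroup of a $\bQ$-subgroup $\mathbf{H} \subseteq \mathbf{G}$, with $\Delta \subseteq \mathbf{H}(\bQ)$.

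The main obstacle is the remaining condition $\gamma|_V \in G$, which is the only genuinely non-$\bQ$-rational constraint: both $V$ and $G$ are defined over $k$, not over $\bQ$, so a priori one might fear that $\Gamma$ is a "thin" (non-arithmetic) subgroup of $\Delta$. The restriction $\rho : R_V^{\times} \to \GL(V)$, $\gamma \mapsto \gamma|_V$, is a morphism over $k$, so $\mathbf{S} := \rho^{-1}(G) \cap \mathbf{H}_{k}$ is a $k$-algebraic subgroup of $\mathbf{H}_{k}$ and $\Gamma = \Delta \cap \mathbf{S}(k)$. The decisive point is that $\Gamma$, being contained in $\Delta \subseteq \mathbf{H}(\bQ)$, consists entirely of $\bQ$-rational points; therefore its Zariski closure $\mathbf{Z}$ in $\mathbf{H}_{k}$ is defined over $\bQ$, since the ideal of functions vanishing on a set of $\bQ$-points is generated by $\bQ$-rational functions. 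As $\Gamma \subseteq \mathbf{S}$ and $\mathbf{S}$ is closed, we get $\mathbf{Z} \subseteq \mathbf{S}$, and hence
\[ \Delta \cap \mathbf{Z}(\bQ) \subseteq \Delta \cap \mathbf{S}(k) = \Gamma \subseteq \Delta \cap \mathbf{Z}(\bQ), \]
so that $\Gamma = \Delta \cap \mathbf{Z}(\bQ)$. Since $\mathbf{Z}$ is a $\bQ$-subgroup of $\mathbf{H}$ and $\Delta$ is arithmetic in $\mathbf{H}$, the intersection $\Gamma$ is arithmetic. Thus the apparent difficulty evaporates: the feared thin behaviour cannot occur precisely because an algebraic condition forces $\Gamma$ into the rational points of its own ($\bQ$-defined) Zariski closure. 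I expect the only real care to be needed in the $\Lambda$-step—checking that the $\mathbf{G}$-action on $M$ is $\bQ$-rational and that the passage from stabilizing the subspace $\bQ\Lambda$ to stabilizing the lattice $\Lambda$ yields an arithmetic group—where the torsion of $\Lambda$ and the fact that $\Lambda$ need not span $M$ over $R$ are the bookkeeping issues to handle.
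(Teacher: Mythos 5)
Your overall strategy coincides with the paper's: embed $\Gamma$ in the $\bQ$-points of a $\bQ$-algebraic group, absorb the $\bQ$-linear constraints into sub-$\bQ$-algebras whose unit groups are $\bQ$-subgroups, and dispose of the constraints that are only defined over $k$ (those involving $V$ and $G$) by passing to the Zariski closure of the relevant set of $\bQ$-points, which is automatically a $\bQ$-subgroup with the same rational points. That last step is exactly the paper's decisive move, so conceptually the two arguments agree. The difference is the ambient group: you work in $R^{\times}$ with $R=\End_{\gp}(A)\otimes\bQ$, whereas the paper (after reducing to $k=\bC$ by the Lefschetz principle) works in $\Aut(L\otimes\bQ)\times\Aut(L'\otimes\bQ)$, where $L\subset\Lie(A)$ is the period lattice and $L'\supset L$ is the preimage of $\Lambda$, so that $\Lambda=L'/L$. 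Your choice avoids the transcendental uniformization, at the cost of making the $\Lambda$-constraint the delicate one.

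That is where the one genuine gap sits, precisely at the point you dismiss as bookkeeping. Your lattice argument takes place in $M=R\cdot\Lambda\subset A(k)\otimes\bQ$, where $\Lambda$ is visible only through its image $\Lambda/\Lambda_{\tors}$. What your argument makes arithmetic is therefore the stabilizer of $\Lambda$ \emph{modulo torsion}; for $\gamma$ in that group one only gets $\gamma(\Lambda)\subset\Lambda+A(k)_{\tors}$, and the orders of the discrepancies $\gamma(\lambda_i)-\mu_i$ (with $\mu_i\in\Lambda$) are not a priori bounded as $\gamma$ varies, so it is not immediate that $\Aut_{\gp}(A,\Lambda,V)$ has finite index in the group you actually control. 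This can be repaired --- for instance, the mod-torsion stabilizer is finitely generated once shown arithmetic, and the cocycle identity satisfied by $\gamma\mapsto\bigl(\gamma(\lambda_i)-\sum_j a_{ji}\lambda_j\bigr)_i$ bounds these torsion orders uniformly by their values on a finite generating set, reducing the passage from the mod-torsion stabilizer to the honest stabilizer to a finite-index condition --- but that is a real argument, not mere bookkeeping, and it is missing. The paper's device of replacing $\Lambda$ by the torsion-free lattice $L'$ is designed exactly so that this issue never arises: torsion in $\Lambda$ is absorbed into $L'$, and stabilizing $\Lambda$ becomes literally the condition of stabilizing the free $\bZ$-module $L'$, which is a clean arithmetic condition. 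I would either import that device into your setup or supply the finite-index argument explicitly.
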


\begin{proof}
By the Lefschetz principle, we may assume that $k$ is a subfield of $\bC$.

As $k$ is algebraically closed, there is no difference between the
automorphisms of $A$ and those of its extension to $\bC$. Thus,
we may assume that $k = \bC$.

We denote $W := \Lie(A)$, $L\subset W$ its period lattice, and $L'$ 
the subgroup of $W$ containing $L$ such that $\Lambda=L'/L$;
then $L'$ is a free abelian group of finite rank. Let 
$\Gamma': =\Aut(L) \times \Aut(L') \subset 
G': = \Aut(L \otimes \bQ)\times \Aut(L' \otimes \bQ)$.
If we choose bases of $L$ and $L'$ of 
rank $r$ and $s$ say then this inclusion reads
$\GL_r(\bZ) \times \GL_s(\bZ) \subset \GL_r(\bQ) \times \GL_s(\bQ)$. 
We see $G'$ as the group of $\bQ$-points of the algebraic group 
$\GL_r \times \GL_s$ over $\bQ$. To show that $\Gamma$ is arithmetic, 
it suffices to show that it is isomorphic with the intersection in
$G'$ of $\Gamma'$ with the $\bQ$-points of some algebraic subgroup
$G''$ of $\GL_r \times \GL_s$ defined over $\bQ$.

Now it is enough to ensure that $G''(\bQ)$ is the set of pairs
$(\varphi,\psi) \in G'$ satisfying the following conditions
(where $\varphi_\bR$ stands for the extension
$\varphi\otimes \id_\bR$ of $\varphi$ to $W=L \otimes \bR$) :

\begin{enumerate}

\item[(1)] $\varphi_\bR$ is a $\bC$-linear endomorphism of $W$,

\item[(2)] $\varphi_\bR(L'\otimes\bQ)\subset L'\otimes\bQ$,

\item[(3)] $\varphi_\bR \vert_{L'\otimes\bQ}=\psi$,

\item[(4)] $\varphi_\bR(V)\subset V$,

\item[(5)] $\varphi_\bR \vert_V\in G$.

\end{enumerate}

Indeed, if $(\varphi,\psi)\in\Gamma'$ satisfies these five conditions
then $\varphi\in\Aut(L)$ induces an automorphism of $A$ thanks to (1), 
it stabilizes $\Lambda\otimes\bQ$ because of (2) and then $\Lambda$
itself by (3), since $\psi\in\Aut(L')$. With (4) it stabilizes $V$ and
(5) yields that it lies in $\Gamma$.

We are thus reduced to showing that these five conditions define
an algebraic subgroup of $\GL_r\times\GL_s$ (over $\bQ$). 
But the subset of $\rM_r(\bQ) \times \rM_s(\bQ)$ consisting 
of pairs $(\varphi,\psi)$ satisfying (1), (2) and (3) is 
a sub-$\bQ$-algebra, so its group of invertible elements comes 
indeed from an algebraic subgroup of $\GL_r \times \GL_s$
over $\bQ$.

On the other hand, (4) and (5) clearly define an algebraic
subgroup of $\GL_r \times \GL_s$ over $\bR$. But as we are only interested
in $\bQ$-points, we may replace this algebraic subgroup by the (Zariski)
closure of its intersection with the $\bQ$-points 
$\GL_r(\bQ) \times \GL_s(\bQ)$. This closure is an algebraic subgroup of 
$\GL_r \times \GL_s$ over $\bQ$ and the $\bQ$-points are the same.
\end{proof}

\begin{remark}\label{rem:pos}
Assume that $\charac(k) =0$. Then by the above arguments, 
$\pi_0 \, \Aut(X)$ is commensurable with $\Aut_{\gp}^{G_{\aff}}(G)$
whenever $K$ is \emph{diagonalizable}, e.g., when $G$ is 
a semi-abelian variety. But this fails in general. Consider indeed 
a non-zero abelian variety $A$ and its universal vector extension,
\[ 0 \longrightarrow U \longrightarrow G 
\longrightarrow A \longrightarrow 0. \]
Then $G$ is a smooth connected algebraic group,
and $G_{\aff} = U \simeq H^1(A,\cO_A)^{\vee}$
is a vector group of the same dimension as $A$.
Moreover, $G$ is anti-affine (see \cite[Prop.~5.4.2]{BSU}).
Let $Y := \bP(U \oplus k)$ be the projective completion
of $U$. Then the action of $U$ on itself by translation
extends to an action on $Y$, and $X := G \times^U Y$
is a smooth projective equivariant completion of $G$
by a projective space bundle over $A$. One may
check that $\Aut_{\gp}^U(G)$ is trivial, and
$\Aut(X) \simeq \Aut(A)$; in particular, the group
$\pi_0\,  \Aut(X) \simeq \Aut_{\gp}(A)$ is not necessarily finite.
\end{remark}

\bibliographystyle{amsalpha}

\end{document}